\newcommand{\mfld}{\mathcal{M}}   
\newcommand{\sym}{\mathcal{S}}    
\newcommand{\bS}{\mathbb{S}}
\newcommand{\bZ}{\mathbb{Z}}
\newcommand{\bH}{\mathbb{H}}
\newcommand{\bT}{\mathbb{T}}
\newcommand{\vep}{\varepsilon}
\newcommand{\pd}[2]{\frac{\partial #1}{\partial #2}}
\newcommand{\mo}{\mathcal{O}}
\newcommand{\md}{\mathcal{D}}
\newcommand{\mx}{\mathcal{X}}
\newcommand{\my}{\mathcal{Y}}
\newcommand{\inn}[2]{\left\langle #1, #2 \right\rangle}
\DeclareMathOperator{\Rm}{Rm}
\DeclareMathOperator{\Rc}{Rc}
\DeclareMathOperator{\scal}{Scal}
\DeclareMathOperator{\dvol}{dvol}
\newcommand{\bR}{\mathbb{R}}
\newcommand{\bC}{\mathbb{C}}
\newcommand{\bN}{\mathbb{N}}
\newcommand{\sO}{\mathcal{O}}
\newcommand{\gee}{\mathfrak{g}}
\newcommand{\gtil}{\widetilde{g}}
\newcommand{\nabtil}{\widetilde{\nabla}}
\newcommand{\Rtil}{\widetilde{\Rm}}
\newcommand{\Rctil}{\widetilde{\Rc}}
\newcommand{\ppt}{\frac{\partial}{\partial t}}
\newtheorem{theorem}{Theorem}
\newtheorem{prop}[theorem]{Proposition}
\newtheorem{corollary}[theorem]{Corollary}
\newcounter{mnotecount}[section]
\let\oldmarginpar\marginpar
\renewcommand\marginpar[1]{\-\oldmarginpar[\raggedleft\footnotesize #1]%
{\raggedright\footnotesize #1}}
\begin{document}

\title[Convergence Stability]{Convergence Stability for Ricci Flow}

\author[Bahuaud]{Eric Bahuaud}
\address{Department of Mathematics,
	Seattle University,
	Seattle, WA, 98122, USA}
\email{bahuaude(at)seattleu.edu}
\author[Guenther]{Christine Guenther}
\address{Department of Mathematics,
Pacific University,
Forest Grove, OR 97116, USA}
\email{guenther@pacific.edu}
\author[Isenberg]{James Isenberg}
\address{Department of Mathematics,
University of Oregon,
Eugene, OR 97403-1222 USA}
\email{isenberg@uoregon.edu}

\date{\today}
\subjclass[2010]{53C44; 58J35; 35K}
\keywords{Stability of the Ricci flow, continuous dependence of the Ricci flow, analytic semigroups}

\maketitle

\begin{abstract} The principle of {\it convergence stability} for geometric flows is the combination of the continuous dependence of the flow on initial conditions, with the stability of fixed points. It implies that if the flow from an initial state $g_0$ exists for all time and converges to a stable fixed point, then the flows of solutions that start near $g_0$ also converge to fixed points. We show this in the case of the Ricci flow, carefully proving the continuous dependence on initial conditions.  Symmetry assumptions on initial geometries are often made to simplify geometric flow equations. As an application of our results, we extend known convergence results to open sets of these initial data, which contain geometries with no symmetries. 
\end{abstract}

\section{Introduction}

In the analysis of the long term behavior of geometric heat flows, it is often useful to consider initial geometries with symmetry, since those symmetries are preserved and so the nonlinear flow equations are greatly simplified \cite{CIJ, IJ, LS, MCF}. If, given a topology and appropriate notion of stability, the flow solution starting at some  metric $g_0$ does converge to a stable fixed-point geometry, does it follow that solutions starting at geometries nearby converge to a stable fixed-point geometry as well? Since geometric heat flows are generally weakly parabolic PDE systems, and since one expects such systems to have a well-posed initial value problem with solutions depending continuously on initial data, it is plausible that flow solutions starting near $g_0$ \emph{do} always converge.

In this work, we make these ideas precise, and apply them to the Ricci flow on compact manifolds.  Given a Riemannian manifold $(\mathcal{M},g),$ the Ricci flow is the weakly parabolic system 
\begin{equation}
\begin{cases}
\label{ricciflow}
\frac{\partial}{\partial t} g(t) &= -2 \Rc(t) \\
g(0) &= g_0, \\
\end{cases}
\end{equation}
where $\Rc$ is the Ricci curvature tensor. We work in the topology of little H\"older spaces of metrics as in  \cite{GIK}.  We define a geometric heat flow to be \emph{convergence stable} if for every initial geometry $g_0$ whose resulting solution converges to a stable fixed-point geometry, there is a neighbourhood of $g_0$ such that every solution starting in that neighbourhood also converges to a stable fixed-point geometry. 

The key to proving convergence stability for a given flow is the verification that the flow is a well-posed initial value problem, with well-posedness encompassing continuous dependence of the solutions on initial data in addition to local existence and uniqueness. The continuous dependence of the Ricci flow for metrics on compact manifolds has been broadly assumed, but we have been unable to find a complete proof of it in the literature (e.g. see \cite{MOF}). Since our results here crucially involve continuous dependence in a certain topology, in Section \ref{Sec:ContDepRF} we provide a careful proof of this property.  

A complete understanding in the compact case is necessary to a subsequent project in which we study the flow on non-compact manifolds, and we therefore include an amount of detail that additionally provides an accessible entry into this theory for geometric analysts. In the course of this exposition, we in particular prove a crucial resolvent bound whose proof is either omitted or incomplete in various treatments of stability for geometric flows, see Proposition \ref{resolvent-estimate}.

We now describe our results.  We endow the bundle of symmetric two tensors, $\sym$, with a topology given by H\"older norms.  In what follows, the spaces $h^{k,\alpha}(\sym)$, $k \in \bN_0, \alpha \in (0,1)$, denote the completion of the space of smooth symmetric two tensors with respect to the $(k,\alpha)$-H\"older norm (see the Appendix for a detailed definition of these spaces).  Given a metric $g_0$, we let $\tau(g_0) \in (0,\infty]$ denote the maximal time of existence of the Ricci flow starting at $g_0$.  We prove the following theorem: 

\

\noindent{\bf Theorem A.} (Continuous Dependence of the Ricci Flow). \label{thm:ctsdep} {\it Let $(\mfld,g_0)$ be a smooth compact Riemannian manifold. Let $g_0 (t)$ be the maximal solution of the Ricci flow \eqref{ricciflow} for $t \in [0,\tau(g_0))$, \  $\tau(g_0) \le \infty$. Choose $0 < \tau < \tau(g_0)$. Let $k \geq 2$.
There exist positive constants $r$ and $C$ depending only on $g_0$ and $\tau$ such that if $$ ||g_1 - g_0||_{h^{k+2,\alpha}} \le r,$$ then for the unique solution $g_1(t)$ of the Ricci flow starting at $g_1$, the maximal existence time satisfies 
$$\tau(g_1) \ge \tau,$$ and
\begin{equation*}
||g_1(t) - g_0(t) ||_{h^{k,\alpha}}  \le C || g_1 - g_0 ||_{h^{k+2,\alpha}} 
\end{equation*} 
for all $t \in [0,\tau].$}

\

The proof of this theorem involves the Ricci-DeTurck flow, to which parabolic PDE theory may be applied; our results give a semigroup proof of well-posedness.  Since we work in finite regularity spaces, the passage from the Ricci-DeTurck flow to the Ricci flow accounts for the drop in regularity in the theorem above.  Note that we are not asserting a drop in regularity of the solution $g_1(t)$ which is smooth after $t>0$, only the differentiability of the dependence estimate.  After giving some analytic background in Section \ref{Sec:Analytic},  in Section \ref{Sec:RDF}  we present a detailed exposition of the continuous dependence of the Ricci-DeTurck flow using analytic semigroup theory.  Working within the setting of little H\"older spaces, we can simplify various parts of the semigroup-based treatment of the well-posedness of parabolic equations; see,  e.g., \cite{Lun95}.  In Section \ref{Sec:ContDepRF}, we return to the Ricci flow and prove Theorem A. 

Having established the continuous dependence of the Ricci flow for metrics on compact manifolds, we prove convergence stability of the Ricci flow on closed $\bT^n$ and $\mathbb{H}^n$ in Section \ref{ConvStab}.  Denote the Ricci flow solution starting at $g_0$ by $g_0(t)$.  We say {$g_0(t)$ {\it converges to $g_{\infty}$ in $h^{2,\alpha}$} if $\tau(g_0)=\infty$ and
\[ \lim_{t \to \infty} \| g_0(t) - g_{\infty} \|_{h^{2,\alpha}} = 0. \]
Note that by results of Hamilton, this is sufficient to obtain convergence in $C^{\infty}(\mfld)$, \cite[Section 17]{Hamilton}.

We say $g_{\infty}$ is a {\it stable fixed point} if there is some $\vep > 0$ so that for every $g_0$ within the $\vep$-ball about $g_{\infty}$ in $h^{2,\alpha}$, the Ricci flow starting at $g_0$ converges to a fixed point.  D. Knopf and two of the authors proved that any flat metric on the torus $\bT^n$ is a stable fixed point for the Ricci flow in this sense, see \cite[Theorem 3.7]{GIK}.  In \cite{KnopfYoung}, the D. Knopf and A. Young prove that a compact hyperbolic metric is also a stable fixed point for a normalized Ricci flow in this sense. 

The statement of the $\bT^n$ result is as follows:

\

\noindent{\bf Theorem B.} (Convergence Stability (Flat)).  \label{thm:cstab-flat} {\it Let $(\mfld, g_0)$ be a smooth closed torus, and let $g_0(t)$ be a solution of the Ricci flow \eqref{ricciflow} starting at $g_0$ that converges to a flat metric $g_\infty$. There exists $r>0$ depending only on $g_0$  such that if $g_1 \in B_r(g_0) \subset h^{2,\alpha}(\sym)$, then the solution $g_1(t)$ with $g(0) = g_1$ of the Ricci flow exists for all time and converges to a flat metric.}

\

This theorem shows that the set of metrics which converge to a flat fixed point is \emph{open} in a certain topology of metrics on $\bT^n$.  We emphasize that the metrics contained in these sets need not have symmetries, nor are they generally small perturbations from a fixed flat metric. We apply this to the results of J. Lott and N. Sesum, who showed that for two families of geometries on $\bT^3$ defined by isometries, every Ricci flow solution which starts at a metric contained in one of these families must converge exponentially quickly to a flat metric on $\bT^3$ \cite{LS}. By Theorem B, we obtain open neighbourhoods of the set of these metrics such that Ricci flow solutions which start with initial geometries in either of  these neighbourhoods must converge to flat metrics. We discuss this and other applications in Section \ref{ConvStab}. 

We strongly expect that the notion of convergence stability holds for a wide range of other geometric heat flows.  We hope to return to these questions in a subsequent work.

In the Appendix we provide some of the  background definitions and results needed in this paper including the basic function spaces which arise and a few results from elliptic PDE theory. 

\textbf{Acknowledgments}{  The authors would like to thank Dan Knopf, Jack Lee, Rafe Mazzeo, and Haotian Wu for helpful conversations related to this work.  This work was supported by grants from the Simons Foundation (\#426628, E. Bahuaud and \#283083, C. Guenther). J. Isenberg was partially supported by the NSF grant DMS-1263431. This work was initiated at the 2015 BIRS workshop 
{\it Geometric Flows: Recent Developments and Applications} (15w5148).}

\section{Analytic background}
\label{Sec:Analytic}

In this section, we introduce some of the ideas and results from analytic semigroup theory that we need in order to prove our main results on continuous dependence and convergence stability for Ricci flow. An overview of stability techniques and results for geometric flows, and in particular maximal regularity theory,  can be found in Chapter 35 of \cite{RFV4}. An excellent reference for this theory in a general setting is provided by \cite{Lun95}. A careful exposition of the geometric ideas that we use, including how to define the relevant function spaces of tensors, as well as certain key facts from elliptic PDE theory, can be found in Appendix A. 

As motivation of the semigroup approach, we recall that in the analysis of systems of ordinary differential equations, for a constant $n$ by $n$ matrix $A$ over $\mathbb C$, and for a vector-valued function of time $x = x(t) \in \bC^n$, the solution to the initial value problem
\begin{equation*}
\begin{cases} 
x'(t) &= A x(t),  \\
x(0) &= x_0,\\
\end{cases} 
\end{equation*} 
exists for all time and can be written in terms of the matrix exponential in the following form: $x(t) = e^{t A} x_0$.  One readily verifies that the long-time behavior of solutions near equilibrium points depends on the eigenvalues of $A$.  We discuss an extension of this setting to general unbounded operators acting on  Banach spaces, as well as nonlinear generalizations.

Let $\mx$ be a Banach space, and let 
\[ A: \md(A) \subset \mx \longrightarrow \mx, \]
be a closed linear operator.  A linear operator  $A$ is {\it closed} if its graph is a closed subset of $\mx \times \mx$.  In that case $\md(A)$ with the graph norm, $\|x\|_{\md(A)} := \|x\| + \|Ax\|$, is a Banach space. The  \emph{spectrum} of $A$ with respect to the Banach space $\mx$ is defined to be the set
\[ \sigma_\mx(A) := \{ \lambda \in \bC: \lambda I - A \; \; \mbox{does not have a bounded inverse} \},  \]
while the  corresponding \emph{resolvent set} is defined as 
\[\rho_\mx(A) := \bC \smallsetminus \sigma_\mx(A).\]
For a chosen value  $\lambda$  in the resolvent set $\rho_\mx(A)$, the corresponding resolvent operator is the map
\begin{equation*}
 R_{\lambda} := (\lambda I - A)^{-1}: \mx \longrightarrow \md(A) \subset \mx. 
 \end{equation*}
We denote by $\mathcal{L}(\mx)$ the set of bounded linear operators on a normed space $\mx$, and we denote by 
$\mathcal{L}(\mx,\my)$ the set of bounded operators from $\mx$ to $\my$. A \emph{sector} is defined to be the set
\[ S_{\theta,\omega} = \{ \lambda \in \bC: \lambda \neq \omega, |\arg(\lambda -\omega)| < \theta\}.\]
The linear operator $A$ is \emph{sectorial} if there exist constants $\omega \in \bR$ and $\theta \in (\pi/2, \pi)$ and $M > 0$ such that 

\

\begin{enumerate}
\item[R1.] (The resolvent set contains a sector): 
the set $S_{\theta,\omega}$ is a subset of $\rho_\mx(A).$

\item [R2.] (Resolvent estimate): the inequality  
\begin{equation}
\label{ResEst}
\| R_{\lambda} \|_{\mathcal{L}(\mx)} \leq \frac{M}{|\lambda - \omega|}
\end{equation} 
holds for all 
$\lambda \in S_{\lambda,\omega}$.
\end{enumerate}

\

Now consider the Banach space valued initial value problem
\begin{equation}
\label{IVP}
u'(t) = A u(t), \; \; u(t_0) = u_0, 
\end{equation}
where $u_0 \in \md(A).$  If $A$ is sectorial, then for an appropriate contour $\gamma$ in the complex plane and for $t>0$, one may define the exponential of $tA$ by
\[ e^{tA} := \frac{1}{2\pi i} \int_{\gamma} e^{t \lambda} R_\lambda \; d\lambda, \]
and analyzing the mapping properties of $e^{tA} $, one obtains well-posedness for the linear IVP \eqref{IVP}. 

Sectoriality is important for nonlinear initial value problems as well. In particular, beginning with a continuous embedding of Banach spaces $\md \subset \mx$, we let $\sO \subset \md$ be an open subset, and we consider the evolution equation
\[ u'(t) = F(u(t)), \; \; u(t_0) = u_0, \]
where 
\[ F: \sO \longrightarrow \mx. \]

\
We require three conditions on the flow:

\begin{itemize} \label{hypothesis} 

\item[H0.] $F$ is continuous and Fr\'echet differentiable.

\item[H1.] For each $u \in \sO$, the Fr\'echet derivative $F_u$ of $F$ is sectorial in $\mx$, and its graph norm is equivalent to the norm of $\md$.

\item[H2.] For each $u \in \sO$, there exist positive constants $R$ and $L$ depending on $u$ such that the inequality 
\[ \| F_u(v) - F_u(w)\|_{\mathcal{L}(\md,\mx)} \leq L \| v - w\|_\md, \]
holds for all $v, w \in B_R(u) \subset \md$. 
\end{itemize} 

\

To state a well-posedness theorem for a nonlinear map $F$ which satisfies conditions H1 and H2, one needs to address a technicality which arises in the study of parabolic evolution systems: A key feature of parabolic systems is \emph{parabolic regularity}, which is the propensity of solutions of parabolic evolution solutions to become smooth  even if the initial data is significantly less regular. Hence, in the formulation of a well-posedness theorem, one must account for diminished regularity as $t$ approaches zero.   Various strategies are used for dealing with this issue in the semigroup setting: one approach is to work with certain weighted function spaces involving powers of $t$ which precisely capture regularity of the solutions down to $t = 0$. A second approach is to use continuous interpolation spaces between the Banach spaces $\md$ and $\mx$. For our work the first approach is adequate.  We thus introduce a series of function spaces that are used only in this section.  

For a time interval $(a,b)$, a H\"older exponent $\alpha \in (0,1)$, and a Banach space $\mx$, we introduce a semi-norm on curves from $(a,b)$ into $\mx$ by
\[ [ f ]_{C^{\alpha}( (a,b); \mx )} = \sup_{
s,t \in (a,b), s \neq t} \frac{\| f(t) - f(s) \|_\mx}{{|t-s|^{\alpha}}}, \]
and a corresponding norm by
\[ \|f\|_{C^{\alpha}( (a,b); \mx )} = \sup_{t \in (a,b)} \|f(t)\|_\mx + [ f ]_{C^{\alpha}( (a,b); \mx )}. \]
Denote the space of continuous curves for which this norm is finite by $C^{\alpha}( (a,b); \mx )$.  The (time) weighted space $C^{\alpha}_{\alpha}( (a,b); \mx)$ is given by curves for which the norm
\[ \|f\|_{C^{\alpha}_{\alpha}( (a,b); \mx )} = \sup_{t \in (a,b)} \|f(t)\|_\mx + [ (t-a)^{\alpha} f ]_{C^{\alpha}( (a,b); \mx )} \]
is finite.

We state an existence theorem in a form suitable for our work here. The estimate  \eqref{well-posedness local estimate} below is the key to the well-posedness result in Theorem \ref{well-posedness}. 
We denote an initial condition by $u_i$, and the solution that starts at $u_i$ by $u_i(t)$. 

\begin{theorem}[Amalgamation of Theorems 8.1.1 and Corollary 8.1.2 in \cite{Lun95}] 
\label{existence}

For $\md \subset \mx$  a continuous embedding of Banach spaces, and for  $\sO$ an open  subset of $\md$, let 
\[ F: \sO \longrightarrow \mx \] satisfy  hypotheses $H0$, $H1$, and $H2$ above.  For every  $u_0 \in \sO$ such that $F(u_0) \in \overline{\md}$, there exists $\delta = \delta(u_0)>0$ and $r = r(u_0) > 0$ such that for any $t_0 \in [0, r)$ there is a unique solution $u$ to  the (nonlinear) initial value problem
\begin{equation}
\label{F-IVP}
u'(t) = F(u(t)), \; \; u(t_0) = u_0. 
\end{equation}
The solution $u$ satisfies the regularity condition
\[ u \in C( [t_0,t_0+\delta]; \md ) \cap C^{1}( [t_0,t_0+\delta]; \mx ), \]
and moreover $u \in C^{\alpha}_{\alpha}( (t_0, t_0+\delta]; \md)$ and $u$ is the unique solution to the IVP in $$\bigcup_{\beta \in (0,1)}~C^{\beta}_{\beta}( (t_0, t_0+\delta]; \md) \cap C( [t_0,t_0+\delta]; \md ).$$
Additionally there is a constant $k = k(u_0) > 0$ such that if  $u_1\in \sO$ satisfies the inequality 
\[ \| u_1 -u_0\| \leq r, \; \; F(u_1) \in \overline{\md}, \]
then the solution $u_1(t)$ satisfies the inequality 
\begin{equation}
\label{well-posedness local pre-estimate}
 \|u_1(t) - u_0(t)\|_{C^{\alpha}_{\alpha}((t_0,t_0+\delta],\md)} \leq k \| u_1- u_0\|_\md. 
 \end{equation}
\end{theorem}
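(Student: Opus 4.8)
This theorem is an assembly of standard facts about fully nonlinear parabolic equations, so the plan is to verify that hypotheses $H0$, $H1$, $H2$, together with the compatibility condition $F(u_0)\in\overline{\md}$, are precisely the standing assumptions under which \cite[Theorem 8.1.1 and Corollary 8.1.2]{Lun95} apply, and then to quote those results. To see why these hypotheses are the natural ones, I would first freeze the linearization at the initial datum: writing $A:=F_{u_0}$ and $G(u):=F(u)-F(u_0)-A(u-u_0)$, the equation \eqref{F-IVP} takes the form $u'=Au+\big(F(u_0)+G(u)\big)$, in which $A$ is sectorial with domain exactly $\md$ by $H1$, while by $H0$ and $H2$ the remainder $G\colon B_R(u_0)\subset\md\to\mx$ is Lipschitz with $G(u_0)=0$ and vanishing derivative at $u_0$, so that after shrinking $R$ its Lipschitz constant is as small as desired. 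This is exactly the structure to which the fully nonlinear local theory applies.

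Granting the reduction, \cite[Theorem 8.1.1]{Lun95} applied on an interval $[t_0,t_0+\delta]$ yields a unique local solution with the asserted regularity. Because the equation is autonomous, the constants $\delta$ and $r$ it produces depend only on $u_0$ --- through the numbers $R$ and $L$ of $H2$ and the sectoriality data $M$, $\theta$, $\omega$ of $A$ --- and not on the choice of $t_0$, which is what makes the statement uniform over $t_0\in[0,r)$. Parabolic smoothing always places the solution in the weighted space $C^{\alpha}_{\alpha}((t_0,t_0+\delta];\md)$; the hypothesis $F(u_0)\in\overline{\md}$ is what upgrades this to $u\in C([t_0,t_0+\delta];\md)\cap C^{1}([t_0,t_0+\delta];\mx)$, continuous in the $\md$-norm up to the initial time. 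This is one place where working in little H\"older spaces is convenient, since there the closure $\overline{\md}$ of $\md$ in $\mx$ is itself tractable. Uniqueness in the class $\bigcup_{\beta\in(0,1)}C^{\beta}_{\beta}((t_0,t_0+\delta];\md)\cap C([t_0,t_0+\delta];\md)$ is part of the same theorem.

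The continuous dependence estimate \eqref{well-posedness local pre-estimate} is then \cite[Corollary 8.1.2]{Lun95}: for $u_1\in\sO$ with $\|u_1-u_0\|\le r$ and $F(u_1)\in\overline{\md}$, the data-to-solution map is Lipschitz into $C^{\alpha}_{\alpha}((t_0,t_0+\delta];\md)$ with a constant $k=k(u_0)$ again controlled by the same data. The real work beyond citation is to check that $H0$, $H1$, $H2$ plus $F(u_0)\in\overline{\md}$ do imply Lunardi's (slightly differently phrased) hypotheses after the freezing step, and that the constants produced there are genuinely independent of $t_0$. I expect the most delicate point to be confirming that the frozen remainder $G$ inherits the small-Lipschitz property \emph{uniformly} on a fixed ball $B_R(u_0)$, since that is exactly where $H2$ is used and where uniformity through the reduction must be tracked.
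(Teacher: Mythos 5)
Your proposal is correct and matches the paper's own treatment: the paper presents this theorem purely as an amalgamation of Theorems 8.1.1 and Corollary 8.1.2 of Lunardi's book and supplies no proof beyond the citation, which is exactly what you do. Your explanatory gloss --- freezing $A=F_{u_0}$, showing via $H2$ that the remainder $G$ has an arbitrarily small Lipschitz constant on a shrinking ball, noting that $F(u_0)\in\overline{\md}$ upgrades the weighted regularity to continuity at $t_0$, and observing that autonomy makes the constants uniform in $t_0$ --- is an accurate account of why Lunardi's hypotheses are met, and is more detail than the paper itself provides.
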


This theorem guarantees the short-time existence, uniqueness, and continuous dependence on initial conditions - in short {\it well-posedness}  - of the nonlinear initial value problem \eqref{F-IVP}.  In view of the definition of the norm for the space $C^{\alpha}_{\alpha}((t_0,t_0+\delta],\md)$, we may translate the estimate from equation \eqref{well-posedness local pre-estimate} to the more useful form
\begin{equation}
\label{well-posedness local estimate}
 \|u_1(t) - u_0(t)\|_{\md} \leq k \| u_1 - u_0\|_\md, \; \; \mbox{for all} \; t \in (t_0,t_0+\delta).
\end{equation}

\section{The Ricci-DeTurck flow}
\label{Sec:RDF}

As is well-known, the Ricci flow PDE system is not itself parabolic; to prove local existence (and well-posedness)  for Ricci flow, one uses the Ricci-DeTurck flow as an intermediary.  In this section, we review the setup of the Ricci-DeTurck flow system, with an emphasis on the analysis which leads to a proof that the Ricci-DeTurck initial value problem is well-posed (including continuous dependence on initial data) for data in the Banach spaces of interest for this work.

Let $\mfld$ be the manifold on which the evolving metrics $g(t)$ are defined, and let $\gtil$ be a fixed reference metric on $\mfld$.   Denoting the connection coefficients, the covariant derivative operators, and the curvatures corresponding to the reference metric $\gtil$ with a tilde, and those corresponding to the evolving metric  $g$ without a tilde, we define the DeTurck vector field 
\[ W^k = W^k(g,\gtil) := g^{pq} \left( \Gamma_{pq}^k - \widetilde{\Gamma}_{pq}^k \right), \] 
and then write the initial value problem for the Ricci-DeTurck flow  as follows:
\[ \partial_t g = F^{\gtil}(g), \; \; g(0) = g_0; \]
here $F^{\gtil}(g) := - 2 \Rc (g) - 2 \mathcal{L}_W g$, and the superscript on $F$ emphasizes the choice of background metric appearing through the vector field $W$.

If we study the Ricci-DeTurck flow for metrics $g$ close to the reference metric $\gtil$, then it is useful to consider the linearization of this flow relative to $\gtil$; we obtain (for $h := g-\gtil$) the linear evolution system 
\[\partial_t h= A^{\gtil}_{\gtil} h,\]
where the operator $A^{\gtil}_{\gtil}$ is the Lichnerowicz Laplacian (relative to $\gtil$): 
\[ A^{\gtil}_{\gtil} h := D_h F^{\gtil}(\gtil)  = \left. \frac{d}{ds} \right|_{s=0} F^{\gtil}( \gtil + sh ) = \tilde \Delta_L h. \]
One readily verifies the following explicit formula for  $A^{\gtil}_{\gtil} =  \tilde \Delta_L$:  
\[\tilde  \Delta_L h_{ij} := {\gtil}^{ab} \nabtil_a \nabtil_b h_{ij} + 2 \Rtil_{iabj} h^{ab} - \Rctil_{ia} h^{a}_j - \Rctil_{aj} h^{a}_i. \]

We also consider solutions that start near a fixed metric $\gee$ which is distinct from the reference metric $\gtil$. 
For this, we linearize the operator $F^{\gtil}$ 
about the metric $\gee$. Labeling this linearization operator $A^{\gtil}_{\gee}$, 
we calculate  (following Proposition 3.2 of \cite{GIK}) 
\begin{equation*}
A^{\gtil}_{\gee} h:= D_h F^{\gtil}(\gee) = \left. \frac{d}{ds} \right|_{s=0} F^{\gtil}( \gee + sh ) = \Delta_L^{\gee} h - \Psi_{\gtil} h, 
\end{equation*}
where $\Psi_{\gtil} h$ is a first-order linear operator in $h$ whose coefficients involve $\gee$-contractions of at most two $\gee$-covariant derivatives of $\gtil$.  This can be written in the following form 
\begin{equation} \label{form-of-A}
  A^{\gtil}_{\gee} h = a(\gee) \partial^2 h + b(\gee, \partial \gee, \gtil, \partial \gtil) \partial h + c(\gee, \partial \gee, \partial^2 \gee, \gtil, \partial \gtil, \partial^2 \gtil) h 
\end{equation}
for suitable components $a, b, c$.   A calculation shows that $A^{\gtil}_{\gee}$ is elliptic.

Having identified the operators of interest, we now discuss the spaces of tensors on which we apply Theorem \ref{existence}.  Let $\sym$ denote the bundle of symmetric $2$-tensor fields on $\mfld$.  Since $A^{\gtil}_{\gee}$ is a second-order elliptic operator on $\sym$, the H\"older spaces $C^{2,\alpha}(\sym)$ are a natural choice.  Unfortunately, $C^{2,\alpha}(\sym)$ is not a closed subset of $C^{0,\alpha}(\sym)$ in the $\alpha$-norm, and this makes the hypothesis of Theorem \ref{existence} involving the closure of the domain $\md$ difficult to check for generic initial data.  Instead we use ``little H\"older spaces'', since the inclusion $h^{2,\alpha} \hookrightarrow h^{0,\alpha}$ is continuous and dense (see Appendix \ref{Geometric background} for a detailed introduction of little H\"older spaces).  In particular let  $$P := A_{\gee}^{\gtil},$$ and set
\[ \md(P) = h^{2,\alpha}(\sym), \]
with 
\[ \mx = h^{0,\alpha}(\sym). \]
Observe that the closure of $\md(P)$ in $\mx$ is $\mx$: 
\[ \overline{\md(P)}= \mx. \]
In particular for any $g \in \md(P)$, $F(g) \in \mx = \overline{\md(P)}$.  We could also take the pair $(\md,\mx) = (h^{k+2,\alpha}(\sym), h^{k,\alpha}(\sym))$, but to keep the notation simple we restrict to $k=0$.  We now build toward verifying the hypotheses of Theorem \ref{existence} for a certain open subset of $\md(P)$.

\subsection{Sectoriality of the DeTurck operator} 

We must show that $P$ is sectorial between these function spaces.  Since this point has been treated tersely (and at times incorrectly) in the existing literature, we start at the beginning.  Note that the arguments we give in this section use the compactness of $\mfld$ in an essential way.

Assume that the chosen reference metric $\gtil$ and the metric $\gee$ at which we are linearizing, are smooth.  Then $P = A_{\gee}^{\gtil}$ is an unbounded self-adjoint elliptic operator with smooth coefficients on $L^2(\sym)$, with maximal domain $\md_{L^2}(P) := \{ w \in L^2(\sym): P w \in L^2(\sym) \}$. We know that there exists a finite $K \in \bR$ for which $\sigma_{L^2}(P) \subset (-\infty, K]$ (see e.g. Appendix Theorem \ref{thm:L2-spectrum}).  Thus the resolvent set $\rho_{L^2}(P)$ contains a sector in the complex plane.

We now analyze the spectrum of $P$  acting as an operator on  H\"older spaces $C^{k,\alpha}(\sym)$ for  $k \in \bN$ and $\alpha \in (0,1)$.  By analogy with the $L^2$ theory, we  consider the maximal domain for $P$:
\[ \md_{C^{0,\alpha}}(P) := \{ u \in C^{0,\alpha}(\sym) | Pu \in C^{0,\alpha}(\sym)\}. \]
By definition, the $C^{0,\alpha}$-resolvent set for $P$ is given by
\[ \rho_{C^{0,\alpha}}(P) = \left\{ \lambda \in \bC| \lambda I - P: \md(P) \to C^{0,\alpha}(\sym) \; \mbox{has bounded inverse} \right\}, \]
and the corresponding spectrum is
\[\sigma_{C^{0,\alpha}}(P) = \bC \smallsetminus \rho_{C^{0,\alpha}}(P).\]
 For $\lambda \in \rho_{C^{0,\alpha}}(P)$ we have  the resolvent operator,
 \begin{equation}
 \label{ResOp}
 R_{\lambda} := (\lambda I - P)^{-1}: C^{0,\alpha}(\sym) \to \md(P) \subset C^{0,\alpha}(\sym). 
 \end{equation}

We next give a proof of the following Proposition, which guarantees that the spectrum of $P$ on $C^{0,\alpha}(\sym)$ coincides with its $L^2$ spectrum.  This result relies on  elliptic regularity and on the Sobolev embedding theorem, as well as the compactness of $\mfld$.
\begin{prop} \label{spectra-coincide} Let ($\mfld,g)$ be a closed Riemannian manifold, and $P = A_{\gee}^{\gtil}$. 
For any  $\alpha \in (0,1)$, the $L^2$ and $C^{0,\alpha}$ spectra coincide:
\[ \sigma_{L^2(\sym)}( P ) = \sigma_{C^{0,\alpha}(\sym)}( P ). \]
\end{prop}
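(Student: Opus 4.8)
The plan is to prove the two spectra coincide by showing the two resolvent sets coincide, which amounts to a two-sided argument: every $L^2$-resolvent value is a $C^{0,\alpha}$-resolvent value, and conversely. The easy direction is that if $\lambda \in \rho_{L^2}(P)$ then $\lambda I - P$ is injective as an operator on $C^{0,\alpha}(\sym)$ — indeed, any $u \in \md_{C^{0,\alpha}}(P)$ with $(\lambda I - P)u = 0$ is a fortiori in $L^2$ (since $\mfld$ is compact, $C^{0,\alpha} \hookrightarrow L^2$) and lies in the maximal $L^2$-domain, hence $u = 0$. Surjectivity in the easy direction also follows: given $f \in C^{0,\alpha}(\sym)$, solve $(\lambda I - P)u = f$ in $L^2$, then bootstrap via elliptic regularity — since $f \in C^{0,\alpha}$ and $P$ has smooth coefficients, Schauder estimates promote $u$ from $L^2$ (hence, by Sobolev embedding, $C^{0,\alpha'}$ for some small $\alpha'$, then iteratively to $C^{2,\alpha}$) up to $C^{2,\alpha}(\sym) \subset \md_{C^{0,\alpha}}(P)$. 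So $\lambda \in \rho_{C^{0,\alpha}}(P)$, giving $\sigma_{C^{0,\alpha}}(P) \subseteq \sigma_{L^2}(P)$.

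\emph{The harder direction} is $\sigma_{L^2}(P) \subseteq \sigma_{C^{0,\alpha}}(P)$, equivalently $\rho_{C^{0,\alpha}}(P) \subseteq \rho_{L^2}(P)$. Suppose $\lambda \in \rho_{C^{0,\alpha}}(P)$. I would use self-adjointness of $P$ on $L^2$: the $L^2$-spectrum is real, so it suffices to rule out $\lambda$ being a point of the $L^2$-spectrum, and since $P$ is self-adjoint with compact resolvent (elliptic operator on a closed manifold), $\sigma_{L^2}(P)$ consists of a discrete sequence of eigenvalues. So I only need to show $\lambda$ is not an $L^2$-eigenvalue. If $Pu = \lambda u$ with $0 \ne u \in L^2$, elliptic regularity again bootstraps $u$ to be smooth, in particular $u \in C^{0,\alpha}(\sym) \cap \md_{C^{0,\alpha}}(P)$, and then $(\lambda I - P)u = 0$ contradicts injectivity of $\lambda I - P$ on the $C^{0,\alpha}$-domain (which holds since $\lambda$ is in the $C^{0,\alpha}$-resolvent set). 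Hence $\lambda \notin \sigma_{L^2}(P)$, completing the equality of spectra.

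\emph{The main obstacle} I anticipate is making the elliptic-regularity bootstrap clean and rigorous in the tensor-bundle setting: one must know interior (here, global, since $\mfld$ is closed) Schauder estimates for the system $A_{\gee}^{\gtil}$ acting on sections of $\sym$, together with the $L^p$-to-Hölder Sobolev embeddings, and iterate — starting from $u \in L^2$, weak solution of an elliptic equation with smooth coefficients, one climbs a finite ladder of Sobolev/Hölder spaces to reach $C^{2,\alpha}$ (in fact $C^\infty$). The ingredients are standard — they are presumably the "few results from elliptic PDE theory" collected in the Appendix — so the proof is really an assembly of: (i) $C^{0,\alpha} \hookrightarrow L^2$ on compact $\mfld$; (ii) discreteness/reality of $\sigma_{L^2}(P)$ from self-adjointness plus compact resolvent; (iii) the elliptic bootstrap; (iv) a short injectivity/surjectivity bookkeeping. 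I would present it in exactly that order, front-loading the bootstrap lemma so both inclusions read as two-line arguments.
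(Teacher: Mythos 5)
Your proposal follows essentially the same route as the paper: both inclusions are established via elliptic regularity and the fact that the $L^2$-spectrum of $P$ on a closed manifold is purely point spectrum. The paper proves $\sigma_{L^2}(P)\subseteq\sigma_{C^{0,\alpha}}(P)$ directly (an $L^2$-eigentensor is smooth by elliptic regularity, hence witnesses a kernel on $C^{0,\alpha}$); you prove the contrapositive $\rho_{C^{0,\alpha}}\subseteq\rho_{L^2}$, which is logically equivalent and uses the same ingredients. Your ``easy direction'' matches the paper's second half.

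One step as written would fail in dimension $n\ge 4$: you claim that $u\in L^2$ combined with elliptic regularity yields $u\in C^{0,\alpha'}$ ``by Sobolev embedding.'' But $H^2\not\hookrightarrow C^{0,\alpha'}$ when $n\ge 4$, so the ladder cannot start from $L^2$ that way. The fix, which is the route the paper actually takes, is to exploit that $v\in C^{0,\alpha}\subset L^p$ for every $p$, apply $L^p$ elliptic regularity to get $u\in L^{2,p}$ with $\|u\|_{L^{2,p}}\le C(\|v\|_{L^p}+\|u\|_{L^2})$, and then invoke $L^{2,p}\hookrightarrow C^{0,\alpha}$ for $p$ large enough that $2-n/p>\alpha$; combined with the $L^2$-resolvent bound this gives the $C^{0,\alpha}$ estimate in one pass. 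You do mention the $L^p$-to-H\"older embeddings in your final paragraph, so you clearly have the right tools in mind; the parenthetical ``hence $C^{0,\alpha'}$'' just needs to be replaced by this $L^p$ detour. A second small point: you conclude $\lambda\in\rho_{C^{0,\alpha}}(P)$ from bijectivity alone; this requires an appeal to the closed-graph/open-mapping theorem for the closed operator $\lambda I-P$, whereas the paper simply reads off the bound $\|u\|_{C^{0,\alpha}}\le C'\|v\|_{C^{0,\alpha}}$ directly from the bootstrap chain, which is cleaner since boundedness comes for free.
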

\begin{proof} 
We prove that these subsets of the complex plane coincide.  We first choose any $\lambda \in \sigma_{L^2}(P)$.  Since the $L^2$ spectrum of $P$ on a compact manifold is purely point spectrum, there is an $L^2$-eigentensor  field $u$ corresponding to $\lambda$ for which 
\[ (\lambda I - P) u = 0. \]
Elliptic regularity implies that $u$ is smooth, and thus that $u$ is an element of $\md_{C^{0,\alpha}}(P)$.  It follows  that $\lambda I - P$ as an operator on $C^{0,\alpha}$ has a non-trivial kernel; consequently  $\lambda \in \sigma_{C^{0,\alpha}}(P)$.  This tells us that $\lambda$ is  an eigenvalue for $P$ acting on $C^{0,\alpha}$ as well as on $L^2$.  

We now choose $\lambda \in \rho_{L^2}( P )$.  We know that $(\lambda I - P): L^2(\sym) \to L^2(\sym)$ is invertible with bounded inverse on its domain; i.e., there exists a constant $C_1 > 0$ such that  for all  $v \in L^2(\sym)$, one has 
\begin{equation} \label{l2-est}
 \| (\lambda I - P)^{-1} v  \|_{L^2} \leq C_1 \|v\|_{L^2}. 
\end{equation}
Since $C^{0,\alpha}(\sym) \subset L^2(\sym)$ on a compact manifold, the maximal H\"older domain $\md_{C^{\alpha}}(P)$ is defined and is contained in the maximal $L^2$ domain, $\md_{L^2}(P)$.  

To show that if $\lambda \in \rho_{L^2}( P )$ then $\lambda \in \rho_{C^{0,\alpha}(\sym)}(P)$ (thereby completing the proof of this proposition), we need to verify that  the estimate of equation \eqref{l2-est} holds in the H\"older norms, as well as in the $L^2$ norms.  Indeed, we first note that for every $ v \in C^{0,\alpha}(\sym)$, $u := (\lambda I - P)^{-1} v$ is a well-defined element of $L^2(\sym)$, for which the following  elliptic PDE holds
\begin{equation}
\label{vu}
v = (\lambda I - P) u. 
\end{equation}
We now begin a bootstrap procedure.  Since $C^{0,\alpha}(\sym) \subset L^p(\sym)$ for all $p$, we may regard $v \in L^p(\sym)$.  Hence, elliptic regularity guarantees that $u \in L^{2,p}(\sym)$.  Since $L^{2,p}(\sym) \subset L^p(\sym)$ is a compact embedding, and since $L^p(\sym) \subset L^2(\sym)$ is a continuous inclusion for $p \geq 2$, the elliptic estimate corresponding to \eqref{vu} yields
\[ \|u \|_{L^{2,p}} \leq C ( \|v\|_{L^p} + \|u\|_{L^2} ); \]
here we note that this constant $C$ depends on $\lambda$, since the operator appearing in \eqref{vu} involves $\lambda$.  Now, for $p$ sufficiently large (i.e., for $p$ large enough so that  $2- \frac{n}{p} > \alpha)$), Sobolev embedding tells us that $L^{2,p}(\sym) \subset C^{0,\alpha}(\sym)$.  Consequently, we find that 
\[ \|u\|_{C^{0,\alpha}} \leq \|u \|_{L^{2,p}} \leq C ( \|v\|_{L^p} + \|u\|_{L^2} )\leq C' \|v\|_{C^{0,\alpha}}. \] 
We thus verify that  $\| (\lambda I - P)^{-1} v  \|_{C^{0,\alpha}} \leq C \|v\|_{C^{0,\alpha}}.$  This shows that $\lambda \in \rho_{C^{0,\alpha}(\sym)}(P)$, which completes the proof.
\end{proof}

Proposition \ref{spectra-coincide} combined with the fact that the $L^2$-spectrum of $P$ is contained in the ray $(-\infty, K]$ show that $\rho_{C^{0,\alpha}}(P)$ contains a sector. 

The second part of the proof of the sectoriality of a given operator is the verification of the resolvent estimate \eqref{ResEst}.
This requires that we obtain uniform control of the constant $M$, and thus requires some care.  In the literature it has been stated that the resolvent estimate follows from standard Schauder theory; however,  since the operator $\lambda I  - P$ depends on $\lambda$, use of the Schauder estimates results in constants which depend on $\lambda$. Instead, we adapt a scaling argument from \cite{BM} to obtain the resolvent estimate.  The reader may compare our discussion with that in  Chapter 3 of \cite{Lun95}; there, resolvent  estimates (on $\bR^n$) are obtained  from the classical Agmon-Douglis-Nirenberg estimates for operators with less regular coefficients.

\begin{prop} \label{resolvent-estimate} 
Let $(\mfld,g)$ be a compact smooth Riemannian manifold, and let $ \Delta_L$ denote the Lichnerowicz Laplacian with respect to $g$ acting on symmetric $2$-tensors. For any positive constant $K$ greater than the maximum eigenvalue of  $ \Delta_L$  and for a fixed  $\alpha \in (0,1)$, there exists $M > 0$ such that for all $\lambda$ with $Re(\lambda) > K$, and for all $f \in C^{0,\alpha}(\sym)$, the resolvent operator $R_\lambda$ (see \eqref{ResOp}) satisfies the inequality
\[ \| R_{\lambda} f \|_{\mathcal{L}(C^{0,\alpha})} \leq \frac{M}{|\lambda|} . \]
\end{prop}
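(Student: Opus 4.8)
The plan is to reduce the statement to an a priori estimate and then establish that estimate by a localized scaling argument, comparing $\Delta_L$ near each point of $\mfld$ to a constant-coefficient model on $\bR^n$, in the spirit of \cite{BM}. Since $\mathrm{Re}(\lambda) > K$ exceeds the maximum eigenvalue of $\Delta_L$, Proposition \ref{spectra-coincide} combined with the description of the $L^2$ spectrum (Appendix Theorem \ref{thm:L2-spectrum}) shows $\lambda \in \rho_{C^{0,\alpha}}(\Delta_L)$, so $R_\lambda$ is defined and it is enough to produce $M$ such that every $u \in C^{2,\alpha}(\sym)$ with $(\lambda I - \Delta_L)u = f$ and $\mathrm{Re}(\lambda) > K$ satisfies $|\lambda|\,\|u\|_{C^{0,\alpha}} \le M\,\|f\|_{C^{0,\alpha}}$. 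One cannot simply quote Schauder theory here because the operator $\lambda I - \Delta_L$ depends on $\lambda$, so its Schauder constants degrade as $|\lambda| \to \infty$; the point of the scaling argument is that at the natural length scale $|\lambda|^{-1/2}$ the rescaled operator is, uniformly over the compact $\mfld$, a fixed flat model perturbed by small terms.

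I would first record the model estimate. For a constant positive-definite symbol $a$ and the operator $L_0 := a^{ab}\partial_a\partial_b$ acting componentwise on symmetric $2$-tensors over $\bR^n$, the resolvent $(\lambda I - L_0)^{-1}$ is convolution with an explicit Bessel-type kernel $G_\lambda$; since $G_\lambda \ge 0$ with $\int G_\lambda = |\lambda|^{-1}$ for real $\lambda$ and, more generally, $\|G_\lambda\|_{L^1} \le C/|\lambda|$ for $\mathrm{Re}(\lambda) > 0$, we get $\|(\lambda I - L_0)^{-1}\|_{\mathcal{L}(C^{0,\alpha}(\bR^n))} \le C/|\lambda|$, together with the familiar gain $\|(\lambda I - L_0)^{-1}\|_{\mathcal{L}(C^{0,\alpha}, C^{1,\alpha})} \le C|\lambda|^{-1/2}$. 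Next, localize: cover $\mfld$ by finitely many small coordinate balls, and on each ball centered at $x_0$ freeze the principal coefficients of $\Delta_L$ at $x_0$, writing $\Delta_L = L_0 + B$ where $B$ contains the spatially varying correction to the principal part plus the first-order and zeroth-order (curvature) terms. On a ball of radius comparable to $|\lambda|^{-1/2}$ the principal-part correction has size $O(|\lambda|^{-1/2})$ by the uniform $C^1$ bounds on the coefficients --- here the smoothness and compactness of $(\mfld,g)$ are used --- and combining this with the $|\lambda|^{-1/2}$ gain on the lower-order part gives $\|B(\lambda I - L_0)^{-1}\|_{\mathcal{L}(C^{0,\alpha})} \le \tfrac{1}{2}$ once $|\lambda|$ is large. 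A Neumann series then produces a local bound $\|(\lambda I - \Delta_L)^{-1}\|_{\mathcal{L}(C^{0,\alpha})} \le 2C/|\lambda|$.

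To globalize, I would reassemble the local estimates with a partition of unity subordinate to the cover, controlling the commutators of $\Delta_L$ with the cutoffs (these are lower-order, so absorbed using the derivative gain of the resolvent, after iterating the cutoff if necessary), which yields $|\lambda|\,\|u\|_{C^{0,\alpha}} \le M\,\|f\|_{C^{0,\alpha}}$ for $\mathrm{Re}(\lambda) > K$ and $|\lambda|$ large; the remaining region $\{\mathrm{Re}(\lambda) > K,\ |\lambda| \le R_0\}$ is a compact subset of the resolvent set on which $\lambda \mapsto \|R_\lambda\|$ is bounded by continuity, so the estimate extends there as well. One could instead argue by contradiction: a blow-up at scale $|\lambda_j|^{-1/2}$ of a hypothetical counterexample produces a nonzero bounded solution on $\bR^n$ of $(\mu I - L_0)v = 0$ with $|\mu| = 1$ and $\mathrm{Re}(\mu) \ge 0$, which is killed by a Liouville theorem since $\mu + |\xi|^2$ never vanishes, while the case of bounded $|\lambda_j|$ is ruled out by Proposition \ref{spectra-coincide} and the eigenvalue bound.

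The main obstacle is precisely the uniformity in $\lambda$: every constant appearing --- in the model resolvent bound, in the Neumann series for the perturbation $B$, and above all in the cutoff commutators at width $|\lambda|^{-1/2}$ --- must be kept independent of $\lambda$, and this is the point that is either omitted or handled incompletely elsewhere. The compactness of $\mfld$ is what makes the coefficient bounds, and hence the smallness threshold on $|\lambda|$, uniform over the manifold.
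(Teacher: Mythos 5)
Your proposal is correct in outline but uses a genuinely different \emph{main} route from the paper. The paper proceeds entirely by contradiction via a blow-up: assuming sequences $\lambda_n$ with $\mathrm{Re}(\lambda_n)>K$, $|\lambda_n|\to\infty$, and $f_n$ violating the bound, it normalizes $w_n := R_{\lambda_n}f_n/\|R_{\lambda_n}f_n\|_{C^{0,\alpha}}$, picks points $x_n$ where $|R_{\lambda_n}f_n|$ attains its supremum, passes to geodesic normal coordinates around $x_n$, rescales by $y=|\lambda_n|^{1/2}z$, and extracts a diagonal-subsequence limit $w_*$ satisfying $(e-\Delta)w_*=0$ on $\bR^n$ with $|e|=1$, $\mathrm{Re}(e)\ge 0$, $\|w_*\|_{L^\infty}\le 1$, $|w_*(0)|=1$; this is then ruled out because the symbol $e+|\xi|^2$ never vanishes. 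The bounded-$|\lambda_n|$ case is handled exactly as you suggest, by compactness and continuity of the resolvent away from the H\"older spectrum.

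Your main argument is instead a direct parametrix construction: a model resolvent bound via the explicit Bessel-type kernel, coefficient-freezing on small balls, a Neumann series absorbing the frozen-coefficient error and the lower-order terms (using the $C|\lambda|^{-1/2}$ gain from $C^{0,\alpha}$ to $C^{1,\alpha}$), and a partition-of-unity patching. This is essentially the Lunardi/ADN route, the very one the authors mention (``compare our discussion with that in Chapter 3 of \cite{Lun95}'') and choose \emph{not} to take. It does work and has the advantage of producing an explicit, constructive $M$, but the commutator control in the patching step is genuinely delicate and is the place you are vaguest: with cutoffs at a fixed scale $r_0$, the commutator $[\Delta_L,\chi]$ carries coefficients of size $r_0^{-1}$ and $r_0^{-2}$, so the Neumann-series error is of order $r_0^{-1}|\lambda|^{-1/2}+r_0^{-2}|\lambda|^{-1}$ plus an $O(r_0)$ term from the frozen principal part, forcing one to first choose $r_0$ small and then $|\lambda|$ large relative to $r_0^{-2}$ before everything is absorbable. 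This needs to be said explicitly rather than deferred to ``after iterating the cutoff if necessary.'' The blow-up approach the paper actually uses sidesteps exactly this bookkeeping at the cost of being non-constructive, and you correctly sketch it at the end of your proposal (your ``Liouville theorem'' is the paper's Fourier-symbol nonvanishing argument; note that on a compact manifold normal coordinates make the frozen model the flat $\Delta$, so your $L_0$ reduces to it). Either route, properly completed, establishes the proposition; note also that the scaling argument the authors attribute to \cite{BM} is the blow-up contradiction, so that citation better fits your alternative sketch than your main construction.
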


\begin{proof} As a preliminary step, by rescaling the metric we may assume that the injectivity radius is bounded below by $1$.

We proceed by contradiction:  If the estimate is false, then for every $n \in \bN$ it must be possible to find a number $\lambda_n$ with $Re(\lambda_n) > K$ and a tensor field  $f_n \in C^{0,\alpha}(\sym)$ for which the following holds:
\begin{equation} \label{eqn:bwoc-holder} \| R_{\lambda_n} f_n \|_{C^{0,\alpha}} > \frac{n}{|\lambda_n|} \| f_n \|_{C^{0,\alpha}}. \end{equation} 

If we further assume that the sequence of numbers $\lambda_n$ remains in a compact set, then some subsequence must converge to some $\lambda_*$ with $ Re(\lambda_*) \geq K$.  But then along this subsequence, the estimate of equation \eqref{eqn:bwoc-holder} above shows that $\|R_{\lambda_*}\|_{\mathcal{L}(C^{0,\alpha})}$ must diverge, which is impossible since $\lambda_*$ is not contained in the H\"older spectrum of $\Delta_L$.  Consequently,  we may now assume that $\lambda_n \to \infty$ (note that $\lambda_n \in \bC$, so this  means that $|\lambda_n| \to +\infty$).

We consider the sequence of tensor fields
\[ w_n(x) := \frac{R_{\lambda_n} f_n(x)}{\| R_{\lambda_n} f_n \|_{C^{0,\alpha}}}. \]
Applying the operator $I - \lambda_n^{-1} \Delta_L$ to each $w_n$, and recalling the definition of the resolvent operator  $R_{\lambda_n}$ (see \eqref{ResOp}), we obtain
\begin{equation} \label{eqn:limit} 
 (I - \lambda_n^{-1} \Delta_L) w_n = \lambda_n^{-1} ( \lambda_n I - \Delta_L )w_n = \lambda_n^{-1} \frac{f_n(x)}{\|R_{\lambda_n} f_n\|_{C^{0,\alpha}}}.
\end{equation}
It is an immediate consequence of inequality  \eqref{eqn:bwoc-holder} that the $C^{0,\alpha}$ norm of the right hand side of equation \eqref{eqn:limit} converges to zero as $n \to \infty$. Pulling the analysis back to normal coordinates centred at certain points, we now show that this limiting behavior leads to a contradiction. 

It follows from the compactness of the manifold $\mfld$ and from the continuity of $R_{\lambda_n}f_n$ that for  each $n \in \bN$, there is a point $x_n$ at which  the supremum of $R_{\lambda_n} f_n$ is achieved;  i.e., we have \[ \| R_{\lambda_n} f_n \|_{L^{\infty}(\mfld)} = \sup_{x\in \mfld} |R_{\lambda_n} f_n(x)| = |R_{\lambda_n} f_n(x_n)| \]
(of course the norm of a $2$-tensor $\zeta_{ij}$ is given by $| \zeta |^2 = g^{ij}g^{kl}  \zeta_{ik} \zeta_{jl}$).

Also as a consequence of the compactness of $\mfld$, we know that there exists a subsequence of the sequence $\{x_n\}$ which converges to some point $x_\infty \in \mfld$. Relabeling this subsequence as $\{x_n\}$, and letting $B_n$ denote the unit ball with centre $x_n$, we now introduce geodesic normal coordinates $z$ in each ball, centred at $x_n$ (to simplify the notation, we suppress any labelling of these geodesic coordinates related to the index $``n"$).  In terms of these coordinates (in each ball $B_n$), the metric components takes the special form
 $g_{ij} = \delta_{ij} + O(|z|^2)$, and moreover since compact smooth Riemannian manifolds are of bounded geometry, any number of derivatives of the metric are uniformly bounded in normal coordinates \cite[Proposition 1.2]{Eichhorn}.  Finally the tensor bundle $\sym$ is trivialized in terms of these coordinates over $B_n$.  Now return to equation \eqref{eqn:limit} and for each $n$ express in these normal coordinates.  The components then read
\begin{equation}
\label{eqn:components1}
 \lambda_n (w_n)_{ij} - (\Delta_L w_n)_{ij}  = \frac{(f_n(z))_{ij}}{\|R_{\lambda_n} f_n\|_{C^{0,\alpha}}}.
\end{equation}
 
In terms of geodesic coordinates (in each ball), the Lichnerowicz Laplacian---acting on a symmetric tensor field $u$--- takes the following form
 \begin{align*}
 \label{DeltaLz}
  (\Delta_L u)_{ij} &= {g}^{kl} \nabla_k \nabla_l u_{ij} + 2 \Rm_{iklj} u^{kl} - \Rc_{ik} u^{k}_j - \Rc_{kj} u^{k}_i\\
 &= \delta^{kl} \frac{\partial^2 u_{ij}}{\partial z^k \partial z^l}  + (S(u) + 2\Rm*u - 2\Rc*u)_{ij},
 \end{align*}
where  $S$ is a second-order operator in $u$ with bounded smooth coefficients that vanishes to order $|z|^2$ and $*$ indicates  contractions with respect to the metric $g$.  Using this we find equation \eqref{eqn:components1} now reads
\begin{equation}
\label{eqn:components2}
 \lambda_n (w_n)_{ij} - \delta^{kl} \frac{\partial^2 (w_n)_{ij}}{\partial z^k \partial z^l}  - (S(w_n) + 2\Rm*w_n - 2\Rc*w_n)_{ij}  = \frac{(f_n(z))_{ij}}{\|R_{\lambda_n} f_n\|_{C^{0,\alpha}}},
\end{equation}
which we regard as a sequence of a system of equations on $B_1(0)$.  Introduce the operator $Z$ whose action on $w_n$ is given by
\[ Z w_n := S(w_n) +  2\Rm*w_n - 2\Rc*w_n. \] 

Now rescale each system by introducing coordinates $y := \sqrt{|\lambda_n|} z$. In doing so, we  observe that since  the coordinates $z$ are valid on $B_1(0)$, it follows that  the coordinates $y$ are valid on $B_{|\lambda_n|}(0)$.  Computing $S$ from \eqref{eqn:components2} in these rescaled coordinates using the chain rule $\partial_z = \sqrt{|\lambda_n|} \partial_y$ and $\partial^2_z = {|\lambda_n|} \partial^2_y$ and the asserted facts about the metric in normal coordinates allows us to write
\[ Z w_n = |\lambda_n|\widetilde{Z} w_n, \]
where $\widetilde{Z}$ has coefficients that converge to zero on compact subsets.  Thus we rewrite equation \eqref{eqn:components2} in the rescaled $y$ coordinates as
\begin{equation}
\label{eqn:components3}
 \lambda_n (w_n)_{ij} - |\lambda_n| \delta^{kl} \frac{\partial^2 (w_n)_{ij}}{\partial y^k \partial y^l}  - |\lambda_n| \widetilde{Z}(w_n)_{ij}
   = \frac{\left(f_n\left( \frac{1}{\sqrt{|\lambda_n|}} y \right)\right)_{ij}}{\|R_{\lambda_n} f_n\|_{C^{0,\alpha}}}.
\end{equation}
Set
\[ \mathcal{E}_1 = \frac{f_n\left( \frac{1}{\sqrt{|\lambda_n|}} y \right)}{ |\lambda_n| \cdot \|R_{\lambda_n} f_n\|_{C^{0,\alpha}}}, \]
from here we can write equation \eqref{eqn:components3} as
\begin{equation}
\label{eqn:components4}
 \frac{\lambda_n}{|\lambda_n|} (w_n)_{ij} -  \delta^{kl} \frac{\partial^2 (w_n)_{ij}}{\partial y^k \partial y^l} + \widetilde{Z}(w_n)_{ij} = [\mathcal{E}_1]_{ij}.
\end{equation}

Now consider the behaviour of the right hand side as $n \to \infty$.  By the remarks following equation \eqref{eqn:limit}, we have $\| \mathcal{E}_1 \|_{L^{\infty}(B_{|\lambda_n|}(0))} \leq \| \mathcal{E}_1 \|_{C^{0,\alpha}(B_{|\lambda_n|}(0))} \to 0$.  

To summarize the argument so far, from equation \eqref{eqn:components4} we have a sequence of equations on $\bR^n$ that are eventually defined on a ball of any radius, and whose right hand side converges to zero uniformly on compact sets.  Thus on $B_1(0)$ we may apply local elliptic regularity to write
\begin{equation} \label{eqn:loc-elliptic-reg} 
\| w_n \|_{C^{2,\alpha}(B_1(0))} \leq c \left( \|\mathcal{E}_1 \|_{ {C^{0,\alpha}(B_2(0)) }} + \|w_n\|_{C^{0,\alpha}(B_2(0))} \right), 
\end{equation}
where $c$ is uniformly bounded in $n$ since the coefficient $\lambda_n /|\lambda_n|$ is of unit modulus.  By compactness of the unit sphere and the Arzela-Ascoli theorem we may obtain a subsequence $\lambda_{n_1}$ of $\lambda_n$ and a subsequence $w_{n_1}$ of $w_n$ so that both $\lambda_{{n_1}_k} / |\lambda_{{n_1}_k}|$ converges to a unit modulus element $e \in \bC$, and $w_{n_1}$ converges in $C^{2,0}(B_1(0))$.  

Now return to equation \eqref{eqn:loc-elliptic-reg}, modify the radii involved and repeat this procedure to obtain a subsequence $n_2$ of $n_1$ so that $w_{n_1}$ converges in $C^{2,0}(B_2(0))$.  Proceeding inductively we obtain a subsequences $n_k$ of $n_{k-1}$ so that $w_{n_k}$ converges in $C^{2,0}(B_k(0))$.  Finally, take the diagonal subsequence $w_{{k_k}}$ and set $w_* = \lim_k w_{{k_k}}$, which we may regards as a $C^2$ vector-valued function $w_*$ defined on flat $\bR^n$ that satisfies 
\[ (e-\Delta) w_* = 0, \]
where here $\Delta$ acts as the flat Laplacian on each component.
Moreover, $ \|w_*\|_{L^{\infty}} \leq 1$ and $|w_*(0)| = 1$, so $w_* \neq 0$.  However note that $e\neq -1$ since $Re(\lambda_n) > K$.  A short computation using Fourier analysis shows that no solution can exist, since the total symbol of $e - \Delta$ is nonvanishing.   This contradiction concludes the proof.
\end{proof}

We are now ready to prove that  $\Delta_L$ is a sectorial operator in H\"older spaces. Recall that $P = A_{\gee}^{\gtil}$ is in fact $\Delta_L$ when $\gee=\gtil.$  
\begin{theorem}
Let $(\mfld,g)$ be a closed Riemannian manifold. Then for any $\alpha \in  (0,1)$, 
\[ \Delta_L: \md \subset C^{0,\alpha}(\sym) \to C^{0,\alpha}(\sym) \]
is a sectorial operator. 
\end{theorem}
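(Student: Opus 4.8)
The plan is to verify the two defining conditions R1 and R2 of sectoriality for $P := \Delta_L$, regarded as the closed operator on $\mx = C^{0,\alpha}(\sym)$ with domain $\md = C^{2,\alpha}(\sym)$; closedness and the identification of $\md$ with the maximal domain $\{u \in C^{0,\alpha}(\sym) : \Delta_L u \in C^{0,\alpha}(\sym)\}$ both follow from interior Schauder estimates on the compact manifold $\mfld$. All the analytic content has already been isolated in Propositions \ref{spectra-coincide} and \ref{resolvent-estimate}; what is left is bookkeeping with sectors in $\bC$.

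I would first dispose of R1. Let $K \in \bR$ exceed the largest eigenvalue of $\Delta_L$ on $L^2(\sym)$, so that $\sigma_{L^2(\sym)}(\Delta_L) \subset (-\infty,K]$ by Appendix Theorem \ref{thm:L2-spectrum}, and hence $\sigma_{C^{0,\alpha}(\sym)}(\Delta_L) = \sigma_{L^2(\sym)}(\Delta_L) \subset (-\infty,K]$ by Proposition \ref{spectra-coincide}. Fix any real $\omega > \max\{K,0\}$. Every $\mu$ in the ray $(-\infty,\omega)$ has $\arg(\mu - \omega) = \pi$, so for each $\theta \in (\pi/2,\pi)$ the sector $S_{\theta,\omega}$ is disjoint from $(-\infty,\omega)$, hence from $\sigma_{C^{0,\alpha}}(\Delta_L)$, hence $S_{\theta,\omega} \subset \rho_{C^{0,\alpha}}(\Delta_L)$. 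This is R1, for every such $\theta$; it remains to pin down $\theta$ so that R2 holds too.

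For R2, Proposition \ref{resolvent-estimate} gives $M_0 > 0$ with $\|R_\lambda\|_{\mathcal{L}(C^{0,\alpha})} \le M_0/|\lambda|$ whenever $Re(\lambda) > K$, and I would upgrade this half-plane bound to the sector bound $\|R_\lambda\| \le M/|\lambda - \omega|$. Fix $\theta := \tfrac{\pi}{2} + \arctan\tfrac{1}{2M_0} \in (\tfrac{\pi}{2},\pi)$. For each $s \in \bR\setminus\{0\}$ put $\lambda_s := \omega + is$; since $Re(\lambda_s) = \omega > K$ we have $\|R_{\lambda_s}\| \le M_0/|\lambda_s| \le M_0/|s|$, so the Neumann series $R_\lambda = \sum_{k\ge0}(\lambda_s - \lambda)^k R_{\lambda_s}^{k+1}$ converges on the disk $D_s := \{\,|\lambda - \lambda_s| < |s|/(2M_0)\,\}$ with $\|R_\lambda\| \le 2\|R_{\lambda_s}\| \le 2M_0/|s|$ there. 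An elementary trigonometric check — this is exactly where the value of $\theta$ enters — shows $S_{\theta,\omega} \subset \{Re(\lambda) > \omega\} \cup \bigcup_{s \neq 0} D_s$. On the half-plane piece, $|\lambda| \ge Re(\lambda) > \omega > 0$ forces $|\lambda - \omega| \le |\lambda| + \omega < 2|\lambda|$, so $\|R_\lambda\| \le M_0/|\lambda| \le 2M_0/|\lambda - \omega|$; on $D_s$ one has $|\lambda - \omega| \le |s|\bigl(1 + \tfrac{1}{2M_0}\bigr)$, so $\|R_\lambda\| \le (2M_0+1)/|\lambda - \omega|$. Thus $\|R_\lambda\| \le (2M_0+1)/|\lambda - \omega|$ throughout $S_{\theta,\omega}$, which is R2 with $M := 2M_0 + 1$. (Alternatively one may simply invoke the standard fact — see Chapter 2 of \cite{Lun95} — that a half-plane resolvent bound of this form implies sectoriality.) With R1 and R2 in hand, $P = \Delta_L$ is sectorial on $C^{0,\alpha}(\sym)$.

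The real obstacle in this circle of ideas — a resolvent estimate whose constant is uniform in $\lambda$ — has already been overcome in Proposition \ref{resolvent-estimate} by the blow-up argument there, so relative to that the present proof is soft. The only points that call for a little care are choosing the vertex $\omega$ positive and to the right of $K$ so that $|\lambda|$ and $|\lambda - \omega|$ stay comparable on the relevant portion of the half-plane, and checking that the Neumann-series disks $D_s$, as $s$ ranges over $\bR$, genuinely sweep out an opening strictly greater than $\pi/2$, with the gap from $\pi$ controlled by $M_0$.
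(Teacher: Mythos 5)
Your proof is correct and follows essentially the same route as the paper: both derive the sector containment (R1) from Theorem \ref{thm:L2-spectrum} combined with Proposition \ref{spectra-coincide}, and both obtain R2 from the half-plane bound of Proposition \ref{resolvent-estimate}. The only difference is cosmetic: where the paper simply cites Proposition 2.1.11 of \cite{Lun95} to pass from a uniform half-plane resolvent estimate to a sectorial one, you unpack that lemma via the Neumann-series covering argument (and then note the citation as an alternative), so the two proofs are interchangeable in content.
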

\begin{proof}
Since $\Delta_L$ is an unbounded operator on $L^2$, then Theorem \ref{thm:L2-spectrum} implies there exists $K \in \bR$ so that $\sigma_{L^2}(\Delta_L) \subset (-\infty, K]$.  Thus the $L^2$ resolvent set contains a sector, and Proposition \ref{spectra-coincide} allows us to conclude that the $C^{0,\alpha}$ resolvent set also contains a sector.  Proposition \ref{resolvent-estimate} gives the resolvent estimate in a half-plane.  However knowing the resolvent set contains a half-plane and a uniform estimate in this half-plane is a sufficient condition for sectoriality by Proposition 2.1.11 of \cite{Lun95}.
\end{proof}

\begin{corollary}
If $(\mfld,g)$ is a closed Riemannian manifold. Then for any $\alpha \in  (0,1)$, 
\[ \Delta_L: \md \subset h^{0,\alpha}(\sym) \to h^{0,\alpha}(\sym) \]
is a sectorial operator.
\end{corollary}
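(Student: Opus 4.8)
The plan is to push sectoriality down from $C^{0,\alpha}(\sym)$ --- where it was just established --- to its closed linear subspace $h^{0,\alpha}(\sym)$, which carries the restricted norm. This is an instance of the general principle that the part of a sectorial operator in the closure of its domain is again sectorial (see, e.g., \cite{Lun95}); I would nonetheless carry out the argument explicitly, both for completeness and in order to identify the domain of the restricted operator as $h^{2,\alpha}(\sym)$, matching the setup $\md(P) = h^{2,\alpha}(\sym)$ fixed above. The facts I would use at the outset are: $h^{0,\alpha}(\sym)$ is the closure of $C^{\infty}(\sym)$ in the $C^{0,\alpha}$-norm, $h^{2,\alpha}(\sym)$ is the closure of $C^{\infty}(\sym)$ in the $C^{2,\alpha}$-norm, and $h^{2,\alpha}(\sym) \subseteq h^{0,\alpha}(\sym)$ (a $C^1$ tensor on a compact manifold is a $C^{0,\alpha}$-limit of smooth tensors). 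By the preceding theorem there are constants $\omega \in \bR$, $\theta \in (\pi/2,\pi)$ and $M>0$ with $S_{\theta,\omega} \subset \rho_{C^{0,\alpha}(\sym)}(\Delta_L)$ and $\|R_\lambda\|_{\mathcal{L}(C^{0,\alpha})} \le M/|\lambda-\omega|$ for all $\lambda \in S_{\theta,\omega}$.

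The first substantive step is to show that for each $\lambda \in S_{\theta,\omega}$ the resolvent $R_\lambda = (\lambda I - \Delta_L)^{-1}$ maps $h^{0,\alpha}(\sym)$ into $h^{2,\alpha}(\sym)$. Given $f \in h^{0,\alpha}(\sym)$, I would pick smooth $f_j \to f$ in $C^{0,\alpha}(\sym)$ and set $u_j := R_\lambda f_j$, which is smooth by elliptic regularity for the elliptic operator $\lambda I - \Delta_L$. For this \emph{fixed} operator the global Schauder estimate on the compact manifold $\mfld$, together with injectivity of $\lambda I - \Delta_L$ on $C^{2,\alpha}(\sym)$, gives a constant $C_\lambda$ with $\|u_j - u_k\|_{C^{2,\alpha}} \le C_\lambda \|f_j - f_k\|_{C^{0,\alpha}}$; thus $\{u_j\}$ is Cauchy in $C^{2,\alpha}(\sym)$. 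Its limit is $R_\lambda f$, by continuity of $R_\lambda$ on $C^{0,\alpha}(\sym)$, and being a $C^{2,\alpha}$-limit of smooth tensors it lies in $h^{2,\alpha}(\sym)$. The $\lambda$-dependence of $C_\lambda$, which obstructed the resolvent \emph{estimate} earlier, is immaterial here because $\lambda$ is held fixed.

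To conclude, I would regard $\Delta_L$ as an operator on $h^{0,\alpha}(\sym)$ with domain $\md = h^{2,\alpha}(\sym)$ --- it does map $h^{2,\alpha}(\sym)$ into $h^{0,\alpha}(\sym)$ since its coefficients are smooth --- and verify R1 and R2. For $\lambda \in S_{\theta,\omega}$, the map $\lambda I - \Delta_L \colon h^{2,\alpha}(\sym) \to h^{0,\alpha}(\sym)$ is a bijection: injectivity is inherited from $C^{0,\alpha}(\sym)$, and surjectivity follows from the previous step, since $R_\lambda f \in h^{2,\alpha}(\sym)$ and $(\lambda I - \Delta_L) R_\lambda f = f$ for all $f \in h^{0,\alpha}(\sym)$. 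Hence $S_{\theta,\omega} \subset \rho_{h^{0,\alpha}(\sym)}(\Delta_L)$ with inverse $R_\lambda|_{h^{0,\alpha}(\sym)}$, giving R1 (and the same $\theta \in (\pi/2,\pi)$ is available); and since the $h^{0,\alpha}$-norm is the restriction of the $C^{0,\alpha}$-norm, $\|R_\lambda f\|_{h^{0,\alpha}} = \|R_\lambda f\|_{C^{0,\alpha}} \le (M/|\lambda-\omega|)\|f\|_{C^{0,\alpha}} = (M/|\lambda-\omega|)\|f\|_{h^{0,\alpha}}$, so the very same $\omega,\theta,M$ verify R2. The only place requiring genuine attention is the second paragraph --- that the resolvent preserves the little-Hölder scale; everything else is routine bookkeeping for a closed subspace, and I expect no real obstacle.
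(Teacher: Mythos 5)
Your argument is correct and is, in substance, exactly what the paper's one-line proof (``continuity of the resolvent and density of smooth tensor fields in $h^{0,\alpha}$'') is pointing at: you show $R_\lambda$ carries $h^{0,\alpha}$ into $h^{2,\alpha}$ by approximating by smooth data and passing to the $C^{2,\alpha}$-limit via a Schauder estimate for the \emph{fixed} operator $\lambda I-\Delta_L$, then observe the resolvent bound restricts verbatim since $h^{0,\alpha}$ carries the subspace norm. Your write-up is the paper's proof with the details filled in; no discrepancy in approach.
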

\begin{proof}  This simply follows by continuity of the resolvent and the fact that smooth tensor fields are dense in $h^{0,\alpha}$. \end{proof}

\subsection{Continuous Dependence of the Ricci-DeTurck flow}

We are in a position to prove continuous dependence of the Ricci-DeTurck flow.  Choose any smooth initial metric $g_0$ and set the reference metric $\gtil = g_0$.  Recall that in this exposition we take $\md = h^{2,\alpha}(\sym)$ and $\mx = h^{0,\alpha}(\sym)$, but we may also use $\md = h^{k+2,\alpha}(\sym)$ and $\mx = h^{k,\alpha}(\sym)$.  The Ricci-DeTurck operator is a nonlinear map $F^{g_0}: \md \longrightarrow \mx$, and is Fr\'echet differentiable at any point since it is a polynomial contraction of up to two covariant derivatives of its argument.  Consider the open ball in $\md$ defined by
\[ \sO := B_r( g_0 ) = \{ g \in \md: \|g - g_0\|_{h^{2,\alpha}(\sym)} < r \}, \]
where we choose $r > 0$ sufficiently small that the following three conditions are satisfied (see also \cite{KnopfYoung}):
\begin{enumerate}
\item Every $g$ in $B_r(g_0)$ is a Riemannian metric.
\item For each $g$ in $B_r(g_0)$, the linearization $A^{g_0}_g$ is uniformly elliptic.  
\item For all $g \in \mo$, $h \in h^{2,\alpha}(\sym)$, 
\[ \| (A^{g_0}_g - A^{g_0}_{g_0}) h \|_{h^{0,\alpha}(\sym)} < \frac{1}{M+1} \|h\|_{h^{2,\alpha}(\sym)}, \] 
\end{enumerate}
where $M$ is the constant from Proposition \ref{resolvent-estimate}.

We remark that for item (2), the considerations that lead to equation \eqref{form-of-A} show that the principal symbol of $A^{g_0}_g$ involves only the inverse of the metric, $g^{-1}$. See \cite{KnopfYoung, GIK} for further details. For item (3), standard estimation (see Section 4 of \cite{KnopfYoung}) allows one to conclude that such a bound exists.  

We next verify hypothesis H1 of Theorem \ref{existence}.  Our argument so far only proves the sectoriality of $A^{g_0}_{g_0}$, but we need sectoriality of $A^{g_0}_g$ in an open set.  In view of the third item above, for any $g \in \sO$, $A^{g_0}_g = (A^{g_0}_g - A^{g_0}_{g_0}) + A^{g_0}_{g_0}$ is a small perturbation of a sectorial operator on $\mx$ and hence is sectorial on $\mx$ by \cite[Proposition 2.4.2]{Lun95}.  Moreover, by elliptic regularity the graph norm of $A^{g_0}_{g}$ is equivalent to the $h^{2,\alpha}$-norm.  This verifies hypothesis H1.

The hypothesis H2 follows from the structure of $A$ as a polynomial contraction of the components of equation \eqref{form-of-A} and may be verified in a manner similar to Section 4 of \cite{KnopfYoung}.
We have satisfied the hypotheses of Theorem \ref{existence}, and thus have existence, uniqueness, and continuous dependence on the initial conditions for a short time.  

We next study continuous dependence on initial conditions for the entire interval of existence. In what follows, $g_i$ means the fixed metric at $t=0$, and $g_i(t)$ is the flow that starts at $g_i$.

Let $\tau(g_0)$ be the \textit{maximal time of existence} of a solution that starts at $g_0$.   We conclude this section with the proof of continuous dependence of the Ricci-DeTurck flow.  We use the pair $(\md,\mx) = (h^{k+2,\alpha}(\sym), h^{k,\alpha}(\sym))$ in this theorem since our eventual application next section requires $k=2$.

\begin{theorem}[Continuous Dependence of the Ricci-DeTurck flow]
\label{well-posedness}
Let $(\mfld,g_0)$ be a compact Riemannian manifold, $\md = h^{k+2,\alpha}(\sym)$, $k \ge 0$, and let ${g_0}(t), \  {g_0}(0) = g_0$
be a solution of the Ricci-DeTurck flow with background metric $g_0$,  so the flow exists on a maximal time interval $[0,\tau(g_0)). $ Let $\tau < \tau(g_0).$ Then there exist constants $r > 0$ and $C>0$ depending on $g_0$ and $\tau$, such that if $$ ||g_1 - g_0||_{h^{k+2,\alpha}(\sym)} \le r,$$ then
\[\tau(g_1) \ge \tau,\]
and
\begin{equation*}
||{g}_1(t) - g_0(t) ||_{h^{k+2,\alpha}(\sym)}   \le C || g_1 - g_0 ||_{h^{k+2,\alpha}(\sym)}
\end{equation*} 
for all $t \in [0,\tau].$
\end{theorem}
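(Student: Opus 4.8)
The plan is to promote the local well-posedness of Theorem \ref{existence} to the fixed interval $[0,\tau]$ by a compactness argument along the reference trajectory
\[ \Gamma := \{\, g_0(t) : t \in [0,\tau]\,\} \subset \md, \]
and then to chain the local continuous-dependence estimates by a finite induction. Two preliminary observations make this work: $F^{g_0}$ is autonomous, so local solutions may be started at any time; and, by parabolic regularity (since $g_0$ is smooth), $g_0(t)$ is a \emph{smooth} metric for every $t \in [0,\tau]$, so the sectoriality results of this section apply at each point of the compact curve $\Gamma$. Throughout, the closure conditions in Theorem \ref{existence} are automatic, since $F^{g_0}$ maps $\md$ into $\mx = \overline{\md}$.

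First I would establish local solvability along $\Gamma$ with \emph{uniform} constants. Fix $s \in [0,\tau]$ and recall from \eqref{form-of-A} and the line preceding it that $A^{g_0}_{g_0(s)} = \Delta^{g_0(s)}_L - \Psi_{g_0}$. The Lichnerowicz Laplacian $\Delta^{g_0(s)}_L$ is sectorial on $\mx$ by the sectoriality results above, applied to the smooth metric $g_0(s)$ (the uniform resolvent constant coming through Proposition \ref{resolvent-estimate}), and $\Psi_{g_0}$ is a first-order operator with smooth coefficients, hence a lower-order perturbation; so $A^{g_0}_{g_0(s)}$ is sectorial on $\mx$ by \cite[\S 2.4]{Lun95}, and elliptic regularity makes its graph norm equivalent to the $h^{k+2,\alpha}$-norm. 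Exactly as in the earlier choice of $\sO = B_r(g_0)$, I would then fix a ball $B_{\rho(s)}(g_0(s)) \subset \md$ on which H0, H1 and H2 hold: ellipticity is stable since the principal symbol of $A^{g_0}_g$ sees only $g^{-1}$; $A^{g_0}_g$ stays sectorial for $g$ near $g_0(s)$ because $A^{g_0}_g - A^{g_0}_{g_0(s)}$ is small in $\mathcal{L}(\md,\mx)$; and H2 follows from the polynomial structure of $A$. Theorem \ref{existence} then yields $\delta(s), r(s), k(s) > 0$ such that any solution starting (at any time) within $r(s)$ of $g_0(s)$ exists for time $\delta(s)$ and obeys \eqref{well-posedness local estimate} with constant $k(s)$. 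Because $t \mapsto g_0(t)$ is continuous into $\md$ and the structural quantities just listed — the sectoriality and Lipschitz constants and $\|F^{g_0}(g_0(s))\|_{\mx}$ — are uniform on each ball $B_{\rho(s)}(g_0(s))$, compactness of $[0,\tau]$ lets me extract \emph{uniform} constants $\delta_0 > 0$, $r_0 > 0$, $k_0 \ge 1$ valid at every point of $\Gamma$.

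Next I would run the iteration. Choose a partition $0 = \sigma_0 < \sigma_1 < \dots < \sigma_N = \tau$ of mesh $< \delta_0$, and set $C := k_0^{\,N}$ and $r := r_0/C$, both depending only on $g_0$ and $\tau$. Assume $\|g_1 - g_0\|_{h^{k+2,\alpha}} \le r$. I would prove by induction on $j$ that $g_1(t)$ is defined on $[0,\sigma_j]$ and that $E_j := \|g_1(\sigma_j) - g_0(\sigma_j)\|_{h^{k+2,\alpha}} \le k_0^{\,j}\,\|g_1 - g_0\|_{h^{k+2,\alpha}}$; in particular $E_j \le r_0$. Given this at stage $j$, apply Theorem \ref{existence} with base point $g_0(\sigma_j)$, initial time $\sigma_j$ and perturbed datum $g_1(\sigma_j)$ (legitimate since $E_j \le r_0 \le r(g_0(\sigma_j))$ and $F^{g_0}$ is autonomous): by uniqueness and autonomy the unperturbed solution is $g_0(\cdot)$ on $[\sigma_j,\sigma_j+\delta_0]$, the perturbed one exists on $[\sigma_j,\sigma_j+\delta_0] \supset [\sigma_j,\sigma_{j+1}]$ and agrees there with $g_1(\cdot)$, and \eqref{well-posedness local estimate} (extended to the closed interval by continuity of the solutions in $\md$) gives
\[ \|g_1(t) - g_0(t)\|_{h^{k+2,\alpha}} \;\le\; k_0\, E_j \;\le\; k_0^{\,j+1}\,\|g_1 - g_0\|_{h^{k+2,\alpha}}, \qquad t \in [\sigma_j,\sigma_{j+1}]. \]
Taking $t = \sigma_{j+1}$ closes the induction and shows $\tau(g_1) > \sigma_N = \tau$. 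For arbitrary $t \in [0,\tau]$, pick $j$ with $t \in [\sigma_j,\sigma_{j+1}]$; the same inequality with $k_0^{\,j+1} \le k_0^{\,N} = C$ is the asserted bound.

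The main obstacle will be the uniformization in the first step — showing that the local existence time and the continuous-dependence constant produced by Theorem \ref{existence} do not degenerate as the base point moves along $\Gamma$. This is exactly where the compactness of $\mfld$ (through the sectoriality results above and the uniform resolvent bound of Proposition \ref{resolvent-estimate}), the smoothness of every $g_0(t)$, and the continuity of the trajectory in $\md$ together with the polynomial structure \eqref{form-of-A} are all used. Once the uniform constants $\delta_0, r_0, k_0$ are in hand, the rest — partitioning $[0,\tau]$ and chaining the estimates — is routine bookkeeping.
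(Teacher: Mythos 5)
Your proof is correct and follows essentially the same strategy as the paper's: both compactify the reference trajectory $\Gamma = \{g_0(t): t\in[0,\tau]\} \subset \md$, extract uniform local well-posedness constants $\delta_0, r_0, k_0$ from a finite subcover, and then chain the local continuous-dependence estimate of Theorem~\ref{existence} across a partition of $[0,\tau]$, using the autonomy of $F^{g_0}$ and smoothness of $g_0(t)$ to restart at each node. If anything your bookkeeping is a bit more explicit than the paper's (you record $C=k_0^N$ and the shrinkage $r=r_0/C$ needed to keep the perturbed solution inside the uniform ball at every step, which the paper's covering argument leaves implicit).
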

\begin{proof}
The argument we give is an adaptation of the covering argument of \cite[Proposition 8.2.3]{Lun95}.

Given $g_0(s)$ for $s \in [0,\tau]$ as in the statement, observe from Theorem \ref{existence} that for each $s\in[0,\tau]$ there exists positive constants $r_s, \delta_s$ and $c_s$ so that if $|t_0 - s| \leq r_s$ and $g_1 \in D$ with $\| g_1 - g_0(s) \|_{\md} \leq r_s$ and $F(g_1) \in \overline{\md}$, then the flow has a unique solution on $[s,s+\delta_s]$, with
\begin{equation*}
||{g}_1(s) - g_0(s) ||_{h^{k+2,\alpha}(\sym)}   \le c_s || g_1 - g_0 ||_{h^{k+2,\alpha}(\sym)},
\end{equation*}  
on that short interval.

As the set 
\begin{equation*}
\label{cpt set}
[0,\tau] \times \{g_0(t): t \in [0,\tau]\} 
\end{equation*}
is compact in $\mathbb R \times \md$, it is covered by finitely many open subsets $\{ U_i: i=1, \cdots, N\}$ where
\[ U_i = (s_i - r_{s_i}, s_i + r_{s_i}) \times B_{r_{s_i}}^{\md}(g_0(s_i)). \]
Obtain positive constants $\delta = \min_{1 \leq i \leq N} \delta_{s_i}$, $c = \max_{1 \leq i \leq N} c_{s_i}$, $r = \min_{1 \leq i \leq N} r_{s_i}$. 

Now $(0,g_0) \in U_{i_0}$ for some $i_0$.  Thus for $\|g_1 - g_0\|_{\md} \leq r$, 
\[ ||g_1(t) - g_0(t)||_\md \le c ||g_1 - g_0||_\md, \; \; \mbox{for}  \; t \in  [0,\delta]. \]
If $\delta > \tau$ the proof is complete, otherwise observe $(\frac{1}{2}\delta,g_0(\frac{1}{2}\delta)) \in U_{i_1}$ for some $i_1$, and we may solve and obtain estimates on a further interval of time of length $\delta$ to obtain $\tau(g_0) > \frac{3}{2} \delta$ and
\[ ||g_1(t) - g_0(t)||_\md \le c_1 ||g_1 - g_0||_\md, \; \; \mbox{for}  \; t \in  \left[0,\frac{3}{2}\delta\right], \]
for a new constant $c_1$.  This process terminates after finitely many steps.
\end{proof}

\section{Continuous Dependence of the Ricci flow}
\label{Sec:ContDepRF}

In this section we prove continuous dependence of the Ricci flow on initial conditions. We assume closeness of initial conditions in a slightly more regular space, due to loss of regularity when moving from the Ricci-DeTurck flow to the Ricci flow.  The next theorem quantifies the idea that a sufficiently small perturbation (in $h^{4,\alpha}$) of a smooth metric yields a Ricci flow that remains nearby, as quantified by a $h^{2,\alpha}$ dependence estimate.  This perturbed Ricci flow is smooth after $t = 0$.

\

\noindent{\bf Theorem A} (Continuous Dependence of the Ricci Flow). \label{Continuous dependence of the Ricci flow} \emph{Let $(\mfld,g_0)$ be a compact Riemannian manifold, with $g_0$ a smooth metric, and let $g_0(t), \ \ g(0) = g_0$  be the maximal solution of the Ricci flow that exists for time $t \in [0,\tau(g_0))$, with $\tau(g_0) \le \infty$. Choose $\tau < \tau(g_0)$, $k \geq 2$. Then  there exist positive constants $r$ and $C$ depending only on $g_0$ and $\tau$ such that if 
\[ ||g_1 - g_0||_{h^{k+2,\alpha}(\sym)} \le r,\] then for $g_1(t)$ the unique solution of the Ricci flow starting at $g_1$,
\[ \tau(g_1) \ge \tau, \] and
\begin{equation*}
||g_1(t) - g_0(t) ||_{h^{k,\alpha}(\sym)}  \le C || g_1 - g_0 ||_{h^{k+2,\alpha}(\sym)} 
\end{equation*} 
for all $t \in [0,\tau].$}

\

\begin{proof}

We prove this in the case $k = 2$, but the proof is the same in the general case. We use both the Ricci flow and the Ricci-DeTurck flow.  Unadorned metrics depending on time, for example $g(t)$, refer to solutions of the Ricci flow, whereas time-dependent metrics with hats, for example $\hat{g}(t)$ are solutions to the associated Ricci-DeTurck flow.

Let $g_0(t), \  g(0) = g_0$ be the solution of the Ricci flow defined on $[0,\tau(g_0)),$ where $ \ \tau(g_0) \le \infty$. This generates a maximal solution to the Ricci-DeTurck flow $\hat{g}_0(t)$ with background metric equal to $g_0$, that starts at $g_0$ and exists at least until time $\tau(g_0)$. To see this, note that if $g(t)$ is a solution of the Ricci flow and $\phi(t)$ satisfies the harmonic map heat flow
\begin{equation*}
\ppt \phi_t = \Delta_{g(t),g_0} \phi_t,
\end{equation*}
then $\hat{g}(t) = (\phi_t)_* g(t)$ satisfies the Ricci-DeTurck flow \cite[pg. 121]{HRF}. Existence of the Ricci-DeTurck flow then follows from existence for the harmonic map heat flow.

Let $\tau < \tau(g_0)$.  By Theorem \ref{well-posedness}, there exists $\delta > 0$ so that if $g_1$ is a metric that satisfies
\[ \| g_1 - g_0 \|_{h^{4,\alpha}} < \delta, \]
then there is a unique Ricci-DeTurck flow $\hat{g}_1(t)$ starting at $g_1$ that exists at least until time $\tau$ and there exists a constant $L > 0$ such that
\begin{equation}
\label{RDF-dependence}
\| \hat{g}_1(t) - \hat{g}_0(t) \|_{h^{4,\alpha}} \leq L \| g_1 - g_0 \|_{h^{4,\alpha}}.
\end{equation}
Note that $\hat{g}_1(t)$ is smooth for any fixed time $t > 0$ by parabolic regularity.

Given $\delta$ above, fix $g_1$ with $\| g_1 - g_0 \|_{h^{4,\alpha}} < \delta$.  The DeTurck vector field with respect to the background metric $g_0$ is
\begin{equation}
\label{WDef}
W^k_i = W^k_i(t, \hat{g}_i(t), g_0) =\hat{g_i}(t)^{pq} ( \Gamma(\hat{g_i}(t))_{pq}^k - \Gamma(g_0)_{pq}^k ) 
\end{equation}
for $i = 0, 1$. If we solve the following ODE for the (time-dependent) diffeomorphisms $F_i(t)$ generated by $W_i$,
\begin{equation}
\label{diffeo}
 \partial_t F_i(t) = -W_i \circ F(t), \; \; \;  F_i(0) = Id, 
\end{equation}
where $Id$ is the identity map, we find that the metric $g_i(t) = F^*_i \hat{g}_i(t)$ is a solution of the Ricci flow starting at $g_i$, $i = 0, 1$. Note that at this point, we have lost control of one spatial derivative of our estimates due to \eqref{WDef}, as the $W^k_i$ are controlled in $h^{3,\alpha}$. 

\subsubsection{Estimates for the DeTurck vector field and diffeomorphism}

This subsection is rather technical, involving estimates for the $W$ and $F$.

Let us begin with the easier $W_0$.  We have supposed that the Ricci-DeTurck flow $\hat{g}_0(t)$ exists on $[0,\tau]$, so the curvature tensor is bounded on $[0,\tau]$, i.e. that there exists $K > 0$ where $|\Rm^{\hat{g}_0(t)}(x,t)|\leq K$ for all $x \in \mfld$ and $t \in [0,\tau]$.  Thus by Proposition 6.48 of \cite{ChowKnopf}, choosing $g_0$ as the reference metric, for any $m \in \bN$ there is a constant $C_m = C_m( \tau, K ) > 0$ where $| \nabla^m_{g_0} \hat{g}_0(x,t)|_{g_0} \leq C_m$ for all $x \in \mfld$ and $t \in [0,\tau]$.  In short: any number of derivatives of $\hat{g}_0$ are bounded by a constant depending on $\tau$ and $K$, i.e. there exists a constant $C(\tau, K, k, \alpha) > 0$ where
\begin{equation}
\label{bounds-on-g0}
\| \hat{g}_0(t) \|_{C^{k,\alpha}(\mfld)} \leq C(\tau, K, k, \alpha),
\end{equation}
for all $t \in [0,\tau]$.  This then implies that any norm of $W_0$ satisfies similar bounds on $[0,\tau]$.  

Regarding $W_1$, we must estimate in terms of the difference $\hat{g}_1(t) - \hat{g}_0(t)$ and $\hat{g}_0(t)$.  Omitting indices and time-dependence we must estimate
\begin{align*}
W_1 = \hat{g_1}^{-1} ( \Gamma(\hat{g_1}) - \Gamma(g_0) )
\end{align*}
by interpolating differences.  Since $W_1$ is a polynomial contraction in $\hat{g}_1^{-1}$ and $\partial \hat{g}_1$, one obtains abstractly
\[ W_1 = \mathcal{P}( \hat{g}_1 - \hat{g}_0, \partial(\hat{g}_1 - \hat{g}_0), \hat{g}_0, \partial \hat{g}_0 ), \]
where $\mathcal{P}$ denotes polynomial contractions in the tensors indicated.  In view of equation \eqref{RDF-dependence} and equation \eqref{bounds-on-g0}, we then see there is a constant depending on $\hat{g}_0$, $\tau$, $K$, $L$ and the algebraic structure of $\mathcal{P}$ so that
\[ \| W_1(t) \|_{h^{3,\alpha}} \leq C(\tau, K, L) \| {g}_1 - {g}_0 \|_{h^{4,\alpha}}, \]
for all $t \in [0,\tau]$, and similarly for the difference $W_1 - W_0$.  Taking a spatial derivative of $W_1$ denoted here by $\partial$, we obtain estimates of the form
\[ \| \partial W_1(t) \|_{h^{2,\alpha}} \leq C(\tau, K, L) \| {g}_1 - {g}_0 \|_{h^{4,\alpha}}, \]
and similarly for $W_1 - W_0$.

To obtain estimates for $F$, we may write equation \eqref{diffeo} abstractly as
\[ F_i(t) = -\int_0^t W_i( F(s) ) ds + Id. \] 
From here we can argue that any number of derivatives of $F_0(t)$ remain bounded by a constant that grows at worst linearly in $\tau$.  Up to three spatial derivatives of $F_1(t)$ remain bounded by a constant that grows linearly in $\tau$, and the same for the difference $F_1 - F_0$.  In summary, we have an estimate of the form
\begin{equation} \label{loc-est-f1}
 \| \partial F_1(t) \|_{h^{2,\alpha}} \leq C'(\tau, K, L) \| {g}_1 - {g}_0 \|_{h^{4,\alpha}}, 
\end{equation}
and
\begin{equation} \label{loc-est-f1-f0}
 \| \partial F_1(t) - \partial F_0(t) \|_{h^{2,\alpha}} \leq C''(\tau, K, L) \| {g}_1 - {g}_0 \|_{h^{4,\alpha}}.
\end{equation}
The precise dependence on $\tau$ does not matter for finite time (which is all that is required).

\subsubsection{Returning to the Ricci flow}

We are now ready to compare $g_1(t)$ and $g_0(t)$.  We work in coordinates, and we begin with a generic computation.  Suppose $\hat{g}$ is a metric expressed in local coordinates $\{(V,y^i)\}$ by
\[ \hat{g} = \hat{g}_{ij}(y) dy^i dy^j. \]
Let $F$ be a diffeomorphism $F: U \to V$, $y = F(x)$.  The pullback metric is
\[ g = F^* \hat{g} = \hat{g}_{ij}(F(x)) d (  y^i \circ F ) d (  y^j \circ F ) = \hat{g}_{ij}(F(x)) \pd{F^i}{x^a} \pd{F^j}{x^b} dx^a dx^b, \] 
which leads to the classical transformation law
\[ g_{ab}(x) = \hat{g}_{ij}(F(x)) \pd{F^i}{x^a}(x) \pd{F^j}{x^b}(x), \]
where $F^i = y^i \circ F$ is the $i$th component of $F$.
Now return to the Ricci flow argument  and let \[
F_i: (\mfld,\{x^{a}\},g_i(t)) \longrightarrow (\mfld,\{y^j\},\hat{g}_i(t)).\] We have, in a reference coordinate patch after suppressing time-dependence (note the inclusion of indices below is for reference)

\[ \| (g_1)_{ab} - (g_0)_{ab} \|_{h^{2,\alpha}} = \left\| (\hat{g}_1)_{ij}(F_1(x)) \pd{F_1^i}{x^a}(x) \pd{F_1^j}{x^b}(x) - (\hat{g}_0)_{ij}(F_0(x)) \pd{F_0^i}{x^a}(x) \pd{F_0^j}{x^b}(x) \right\|_{h^{2,\alpha}}, \]
so that the standard interpolation trick lets us write
\begin{align*} \| (g_1)_{ab} - (g_0)_{ab} \|_{h^{2,\alpha}} &\leq \left\| \left[(\hat{g}_1)_{ij}(F_1(x)) - (\hat{g}_0)_{ij}(F_0(x))\right] \pd{F_1^i}{x^a}(x) \pd{F_1^j}{x^b}(x) \right\|_{h^{2,\alpha}} \\ 
&+\left\|  (\hat{g}_0)_{ij}(F_0(x)) \left(\pd{F_0^i}{x^a}(x) - \pd{F_1^i}{x^a}(x)  \right) \pd{F_1^j}{x^b}(x) \right\|_{h^{2,\alpha}} \\
&+\left\|  (\hat{g}_0)_{ij}(F_0(x)) \pd{F_0^i}{x^a}(x) \left(\pd{F_0^j}{x^b}(x) - \pd{F_1^i}{x^b}(x)  \right) \right\|_{h^{2,\alpha}}.
\end{align*}
Taking this inequality, reinstating time-dependence, setting $r = \delta$ and then combining estimates \eqref{RDF-dependence}, \eqref{bounds-on-g0}, \eqref{loc-est-f1} and \eqref{loc-est-f1-f0} above we obtain a new constant,  $C > 0$ independent of $g_1$,   where for all $t \in [0,\tau]$,
\begin{align*} 
\| (g_1)_{ab}(t) - (g_0)_{ab}(t) \|_{h^{2,\alpha}} & \leq C \| g_1 - g_0 \|_{h^{4,\alpha}}. 
\end{align*}

Finally, we remark that by chasing the regularity of the metric throughout the proof above, one obtains a smooth dependence result with an estimate in terms of $h^{k,\alpha}$ norm of the initial data. 

\end{proof}

\section{Convergence Stability of the Ricci Flow and Applications}
\label{ConvStab}

In this section, we apply the continuous dependence result to prove convergence stability of the Ricci flow in several cases.

We begin with the case of a flat metric, $g_0$, on a torus $\bT^n$.  Since the operator has eigenvalues with zero real part, one expects a centre manifold. The stability of the Ricci flow on a torus is given in the following theorem (see Theorem 3.7 in \cite{GIK}):

\

\noindent{\bf Theorem} (Stability (Flat)\cite{GIK}).  \label{thm-GIK}
{\it Given a flat fixed point $g_0$ of the Ricci flow on a closed torus $\bT^n$, there exists a neighbourhood $\mathcal{U} \subset h^{2,\alpha}(\sym)$  of $g_0$ such that for any initial metric $g_1 \in \mathcal{U}$, the Ricci flow $g_1(t)$ converges in $h^{2,\alpha}(\sym)$ to a metric in the centre manifold of flat metrics at $g_0$.}

\

With this theorem and continuous dependence in hand, we obtain convergence stability (this is essentially Corollary 3.8 of \cite{GIK}, with the continuous dependence of the Ricci flow now proved).

\

\noindent{\bf Theorem B.} (Convergence Stability (Flat)).  \label{Convergence to Flat}
{\it Let $(\bT^n, g_0)$ be a smooth closed torus, where $g_0(t), \ g(0) = g_0$  is a solution of the Ricci flow that converges to a flat metric $g_\infty$. Then there exists $r$ such that if $g_1 \in B_r(g_0) \subset h^{2,\alpha}(\sym)$, then the solution $g_1(t), \ g(0) = g_1$ of the Ricci flow exists for all time and converges to a flat metric.}

\

\begin{proof}  This follows from the continuous dependence of the Ricci flow, together with stability of the fixed point $g_\infty$. 
Let $g_0(t), \  g(0) = g_0$ be a solution of the Ricci flow that converges to a flat metric $g_\infty$. 
Since $g_\infty$ is a flat fixed point of the Ricci flow, there exists a stability neighbourhood $\mathcal{U} \subset h^{2,\alpha}$ around $g_\infty$ by the stability theorem above. That is, the Ricci flow of any initial metric within this set converges to a flat metric in the centre manifold at $g_\infty$. 

Since $g_0(t)$ converges to $g_\infty$, we know that there exists finite $\tau$ such that $g_0(t) \in \mathcal{U}$ for all $\tau \le t \le \tau(g_0)$.  Since $\mathcal{U}$ is open, there exists $\rho > 0$ such that $B_\rho(g_0(\tau)) \subset \mathcal{U}$. By the continuous dependence of the Ricci flow,  we can choose $r > 0$  so that if $g_1$ is a metric that satisfies $||g_0 - g_1||_{h^{4,\alpha}} < r,$ then $||g_0(\tau) - g_1(\tau)||_{h^{2,\alpha}} \le C ||g_0 - g_1||_{h^{4,\alpha}}$.  This bound persists by shrinking the distance between $g_0$ and $g_1$; thus, if $||g_0 - g_1||_{h^{4,\alpha}} < \min\{\rho/C, r\}$,  we have 
$g_1(\tau) \in B_\rho(g_0(\tau)) \subset \mathcal{U}$. Then the Ricci flow starting at $g_1(\tau)$ is an extension of the flow at $g_1$ that converges to a flat metric.  
\end{proof}

As an application, consider two classes of metrics on $\bT^3$ with symmetry that were studied by Lott-Sesum \cite{LS}.  For the first class, suppose that $(\mfld^3,g)$ is obtained as a warped product $\mfld = \bS^1 \times N$, with $N = \bT^2$ with metric
\[ g = e^{2u} d\theta^2 + h, \]
where $h$ is a metric on $\bT^2$, $\theta$ is the standard coordinate on $\bS^1$ and $u \in C^{\infty}(\bT^2)$. 

The Ricci flow $g(t)$ starting at such a warped product metric preserves this symmetry and exists for all time.  Moreover, by \cite[Theorem 1.1 case (iii)]{LS}, $\lim_{t\to\infty} g(t)$ exists and converges exponentially fast to a flat metric.  Theorem B thus implies there is a small neighbourhood of the initial metric on which the flow exists for all time and still converges.  We emphasize that metrics in a $h^{2,\alpha}$-neighbourhood of a warped product metric need not be warped product metrics.  Moreover, a calculation (see \cite{LS} for details) shows that the scalar curvature of $g$ is given by
\[ \scal(g) = \scal(h) - 2 \Delta u - 2 |\nabla u|^2. \]
Choose a flat metric $(N,h)$ and highly oscillatory warping function on the torus factor.  It is possible to create a sequence of warped product metrics $g_n$ with arbitrarily large scalar curvature that leaves any (fixed) neighbourhood of a flat metric.  Yet each of these metrics possesses a neighbourhood on which every metric converges exponentially quickly to a flat metric.

For the second class of metrics, we assume that $\mfld$ fibres over $\bS^1$ with $\bT^2$ fibres.  Choose an orientation of $\bS^1$ and let $H \in SL(2,\bZ)$ be the holonomy of the torus bundle.  We further assume that $H$ has finite order.  Now regard $\mfld$ as the total space of a twisted principal $U(1)\times U(1)$ bundle with twist determined by $H$. The Ricci flow $g(t)$ starting at a metric that is invariant under the $U(1) \times U(1)$ action exists for all time, and moreover $\lim_{t\to\infty} g(t)$ exists and converges exponentially fast to a flat metric (see \cite[Theorem 1.6, case (i)]{LS}).  Once more,  convergence stability thus implies that there is a small neighbourhood of the initial metric on which the flow exists for all time, and still converges.  Calculations in \cite{CIJ} show that the scalar curvature of a class of these metrics can be made arbitrarily large.

Finally, we conclude with an example on $\bH^3$.  Consider a compact Riemannian manifold $(\mfld^3,g)$ possessing a metric of constant negative curvature, which we rescale to be $-1$.  In \cite{KnopfYoung}, the authors study a suitably normalized Ricci flow
\[ \partial_t g = - 2\Rc(g) - 4 g, \]
on a compact manifold.  An integration by parts argument using Koiso's Bochner formula allows us to bound the $L^2$ spectrum of the linearized DeTurck operator from above by a negative constant.  The results of Section 3 then show that $g$ is a stable fixed point of this flow:

\

\noindent{\bf Theorem} (Stability (Hyperbolic) \cite{KnopfYoung}).
	{\it Let $(\mfld,g)$ be a closed Riemannian manifold, where $g$ is a metric of constant negative curvature.  Then there exists $\delta > 0$ such that for all $g_0 \in B^{2+\rho}_{\delta}(g)$ the solution to the normalized flow exists for all time and converges exponentially quickly to a constant curvature hyperbolic metric.}

\

Similarly to the proof the flat case we then obtain

\begin{theorem} [Convergence Stability (Hyperbolic)] \label{Convergence to Hyperbolic}
	Let $(\mfld,g_0)$ be a closed Riemannian manifold with $g_0 \in h^{4,\alpha}(\sym)$, and let $g_0(t), \ g(0) = g_0$  be a solution of the Ricci flow that converges to a constant curvature hyperbolic metric $g_\infty$. Then there exists $\epsilon$ such that if $g_1 \in B_\epsilon(g_0) \subset h^{4,\alpha}(\sym)$, then the solution $g_1(t), g(0) = g_1$ of the Ricci flow exists for all time and converges to $g_\infty$.
\end{theorem}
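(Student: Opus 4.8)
Here the Ricci flow appearing in the statement above is to be interpreted as the normalized flow $\partial_t g = -2\Rc(g) - 4g$ studied in \cite{KnopfYoung}, since a constant curvature hyperbolic metric is a fixed point of that flow and not of \eqref{ricciflow}; accordingly we write $g_i(t)$ for the normalized flow starting at $g_i$. The plan is to follow the proof of Theorem B almost line for line, with the centre manifold of flat metrics replaced by the hyperbolic fixed point $g_\infty$. The one ingredient not already in hand is continuous dependence on initial data, on a finite time interval, for the normalized flow; I would establish that first and then repeat the Theorem B argument essentially verbatim.

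For the normalized flow, finite-time continuous dependence follows from Theorem A by a rescaling. If $g(t)$ solves \eqref{ricciflow} on $[0,\infty)$ then $\bar g(s) := e^{-4s}\, g\bigl(\frac{1}{4}(e^{4s}-1)\bigr)$ solves $\partial_s \bar g = -2\Rc(\bar g) - 4\bar g$ on $[0,\infty)$, and conversely, with the initial metric unchanged; since $s \mapsto \frac{1}{4}(e^{4s}-1)$ maps $[0,\infty)$ onto $[0,\infty)$, the hypothesis that the normalized solution from $g_0$ exists for all time forces the associated ordinary Ricci flow to exist for all $t \ge 0$, so Theorem A with $k = 2$ applies to it on every finite interval. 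Composing the resulting $h^{2,\alpha}$ estimate with the bounded factor $e^{-4s} \le 1$, and noting that the rescaling fixes the data, one obtains: for each finite $\sigma$ there exist $r, C > 0$ depending only on $g_0$ and $\sigma$ such that $\|g_1 - g_0\|_{h^{4,\alpha}} \le r$ implies $\tau(g_1) \ge \sigma$ and $\|g_1(s) - g_0(s)\|_{h^{2,\alpha}} \le C\,\|g_1 - g_0\|_{h^{4,\alpha}}$ for all $s \in [0,\sigma]$. (Alternatively, one could re-run Section \ref{Sec:RDF} directly: the normalized Ricci-DeTurck operator is $F^{\gtil}(g) - 4g$, with linearization $A^{\gtil}_{\gee} - 4\,\mathrm{Id}$, which differs from $A^{\gtil}_{\gee}$ by a bounded operator and hence stays sectorial, while the half-plane resolvent bound of Proposition \ref{resolvent-estimate} persists after translating the half-plane; Theorem \ref{well-posedness} and Theorem A then hold verbatim for the normalized flow.)

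With this in place the proof concludes exactly as for Theorem B. By the Stability (Hyperbolic) theorem of \cite{KnopfYoung} there is a neighbourhood $\mathcal{U} \subset h^{2,\alpha}(\sym)$ of $g_\infty$ such that every normalized flow starting in $\mathcal{U}$ exists for all time and converges exponentially to a constant curvature hyperbolic metric; by rigidity of compact hyperbolic metrics this limit is $g_\infty$ (up to pullback by a diffeomorphism near the identity). Since $g_0(t) \to g_\infty$, there is a finite $\tau$ with $g_0(t) \in \mathcal{U}$ for all $t \ge \tau$; as $\mathcal{U}$ is open, fix $\rho > 0$ with $B_\rho(g_0(\tau)) \subset \mathcal{U}$. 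Applying the estimate above with $\sigma = \tau$, choose $\epsilon \le \min\{\rho/C,\, r\}$; then $\|g_1 - g_0\|_{h^{4,\alpha}} < \epsilon$ gives $\tau(g_1) \ge \tau$ and $g_1(\tau) \in B_\rho(g_0(\tau)) \subset \mathcal{U}$. Hence the normalized flow from $g_1(\tau)$ exists for all time and converges to $g_\infty$, and by uniqueness it is the restriction to $[\tau,\infty)$ of the flow from $g_1$; therefore the flow from $g_1$ exists for all time and converges to $g_\infty$.

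I expect the main obstacle to be the first step: verifying carefully that the semigroup-based continuous dependence developed in Sections \ref{Sec:RDF}--\ref{Sec:ContDepRF} genuinely descends to the normalized flow. The rescaling route is clean, but it hinges on the observation that only finite-time closeness of initial data is ever needed, the long-time behaviour being supplied by \cite{KnopfYoung}; the bounded-perturbation route instead requires re-examining the resolvent estimate of Proposition \ref{resolvent-estimate} for the shifted operator. Once this is settled, the remainder is a transcription of Theorem B, the only substantive difference being that the conclusion is convergence to the single fixed point $g_\infty$ rather than to a point of a centre manifold: in the hyperbolic case the linearized DeTurck operator is strictly negative (by Koiso's Bochner formula, as in the discussion preceding the theorem), so no centre manifold arises.
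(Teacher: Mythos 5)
Your proposal is correct and follows the same approach as the paper, which simply says the hyperbolic case is proved ``similarly to the flat case'' and leaves it at that. What you supply that the paper does not is the explicit bridge from Theorem A (stated for the unnormalized flow) to the normalized flow $\partial_t g = -2\Rc(g) - 4g$ that actually appears in the Knopf--Young stability theorem. Both of your routes are valid: the rescaling $\bar g(s) = e^{-4s} g(\tfrac14(e^{4s}-1))$ with $\Rc$ scale-invariant and the time change a bijection of $[0,\infty)$ transfers the finite-time dependence estimate directly, while the bounded-perturbation route just notes that adding $-4\,\mathrm{Id}$ shifts the resolvent set and leaves sectoriality and the half-plane resolvent bound intact. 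Either one settles the point that the paper leaves implicit. You also rightly flag that the limit delivered by Knopf--Young is \emph{a} constant curvature hyperbolic metric, which by Mostow rigidity is isometric to $g_\infty$ but not literally equal to it unless the diffeomorphism is tracked; the paper's phrasing ``converges to $g_\infty$'' is slightly looser on this point than you are. In short, same strategy as the paper, carried out with a bit more care.
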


We will discuss the non-compact case for hyperbolic metrics in a subsequent work.

\appendix

\section{Geometric background}
\label{Geometric background}

\
For the convenience of the reader, in this appendix  we review  notation, define appropriate function spaces and review key facts from elliptic PDE theory.  Expanded summaries can be found in \cite{Besse, Jost:Geometric Analysis}.

Let $E$ be a smooth tangent tensor bundle over a compact Riemannian manifold $(\mfld^m,g)$.  For smooth sections $u, v$, we define the $L^2$ pairing by
\[ (u, v) := \int_{\mfld} \inn{u}{v}_g \dvol_g, \]
and we set $\| u \|_{L^2}^2 := (u,u)$.  Let $C^{\infty}(E)$ denote the space of smooth sections of $E$, and define $L^2(E)$ as the completion of smooth tensor fields with respect to the $L^2$ norm.  In a similar way, for any $k \in \bN_0$ and for any $p \in [1,\infty)$, we  introduce the Sobolev space of tensor fields with $k$ covariant derivatives in $L^p$ by taking the completion of $C^{\infty}(E)$  with respect to the norm
\[ \| u \|_{L^{k,p}}^p := \sum_{i=0}^k \int_{\mfld} \| \nabla^{(i)} u \|^p_g \dvol_g, \]
where $\nabla$ denotes the covariant derivative with respect to the background metric $g$. 

We also need H\"older spaces.  Consider any finite covering of $(\mfld,g)$ by open balls $\mathscr{C} = \{(B_i, \phi_i)\}_{i=1}^N$  with charts $\phi_i: B_i \subset \mfld \to \phi(B_i) \subset \bR^m$ and let $\psi_i$ be an associated smooth  partition of unity, so that $\mathrm{supp}(\psi_i) \subset B_i$ and $\sum_{i=1}^N \psi_i = 1$.  Now if $u: \mfld \to \bR$ is a continuous function, recall that the sup-norm of $u$ is defined by
\[ \| u\|_{\infty} := \sup_{q \in \mfld} |u(q)|. \]
Note that if $u$ is entirely supported inside one coordinate chart $(B_i, \phi_i) \in \mathscr{C}$, then we may define the H\"older quotient of $u$ by simply computing the H\"older quotient of the coordinate representation of $u$ with respect to a background Euclidean metric, $\delta$; in other words,
\begin{equation} \label{holder-in-coord}
 [u]_{\alpha; B_i} := \sup_{x \neq x', x,x' \in \phi(B_i)} \frac{ | u(\phi_i^{-1}(x)) - u(\phi_i^{-1}(x')) |}{|x-x'|^{\alpha}}. 
\end{equation}

For functions not supported in a single coordinate chart, let us define a H\"older norm with respect to a covering $\mathscr{C}$ by
\[ \|u\|_{C^{0,\alpha}; \mathscr{C}} := \| u\|_{\infty} + \sum_{i=1}^N [\psi_i u]_{\alpha; B_i}, \]
and then define the space $C^{0,\alpha}(\mfld)$ as the set of continuous functions on $\mfld$ such that $\|u\|_{C^{0,\alpha};\mathscr{C}} < \infty$.  While this definition depends on the choice of cover, one may check that membership in $C^{0,\alpha}$ is independent of this choice and that these norms are all equivalent up to constants depending on the cover.  We hereafter omit the symbol $\mathscr{C}$ from the norm notation.

Regarding H\"older spaces for tensor fields, the key change needed is that in the local definition of the H\"older space for functions (see \eqref{holder-in-coord}), we replace $u(\phi_i^{-1}(x))$ by  $(\phi^{-1}_i)^* u(x)$, and then compute the H\"older quotient of the Euclidean norm of the components of $u$ in coordinates as follows:
\begin{equation} \label{holder-in-coord-tensor}
 [u]_{\alpha; B_i} := \sup_{x \neq x', x,x' \in \phi_i(B_i)} \frac{ | (\phi_i^{-1})^* u (x) - (\phi_i^{-1})^* u(x') |_{\delta}}{|x-x'|^{\alpha}}. 
\end{equation}
Using this definition for the H\"older norm on a patch $B_i$, we define the $C^{k,\alpha}$ norm on the compact manifold $\mfld$ (for any $k \in \bN_{0}$ and for any $\alpha \in (0,1)$) as follows (suppressing the dependence on $\mathscr{C}$):
\[ \|u\|_{C^{k,\alpha}} := \sum_{i = 0}^{k} \sup_{\mfld} \| \nabla^i u\|_g + \sum_{i=1}^N [\psi_i \nabla^k u]_{\alpha; B_i}. \]
We denote the space of continuous sections of $E$ for which this norm is finite by $C^{k,\alpha}(E)$.

Since the space of smooth tensor fields is not closed in the H\"older norms, in order to obtain the cleanest possible statements of our results, we also introduce the space of little H\"older continuous tensor fields.  Let $h^{k,\alpha}(E)$ denote the completion of the set of smooth tensor fields with respect to the $C^{k,\alpha}$ norm.  We note that the space $h^{k,\alpha}(E)$ is a proper closed subset of $C^{k,\alpha}(E)$ \cite{Lun95}.

We now discuss differential operators, first on functions and then on tensor fields.  Consider a linear differential operator $P$ of order $k$ acting on functions, given in local coordinates by
\[ P u = \sum_{|I|=0}^k a_I(x) \partial^I u, \]
where $I$ is a multi-index and $a_I$ is a sufficiently smooth coefficient function (the precise regularity is expounded below).  The principal symbol of $P$ is defined by 
\[ \sigma_{\xi}(P;x) := i^k \sum_{|I| = k} a_I(x) \xi^{I}. \]
The operator $P$ is defined to be {\it elliptic} if $\sigma_{\xi}(P;x) \neq 0$ for all $\xi \neq 0$.  If $P$ is a linear differential operator of order $k$ acting on sections of a tensor bundle $E$, then in terms of local coordinate-basis tensor components (with indices such as $``\ell"$ representing collective tensor indices, and with summation over repeated indices presumed) one has 
\[ (P u)_j = \sum_{|I|=0}^k {a^{\ell} _j}_I(x) \partial^I u_{\ell}, \]
with the corresponding (now tensorial) symbol
\[ \sigma_{\xi}(P;x)^{\ell}_j = i^k \sum_{|I| = k} (a_j^\ell)_I(x) \xi^{I}. \]
$P$ is elliptic if, for all $\xi \neq 0$, $\sigma_{\xi}(P;x)^\ell_j$ is an isomorphism.  $P$ is {\it strongly elliptic}
if there exists a constant $C > 0$ such that
\[ -(a_j^\ell)_I(x) \xi^I \eta^j \eta_\ell \geq C |\xi|^2 |\eta|^2, \; \forall x, \xi, \eta. \]

 If $P$ has smooth coefficients, we may regard it as a map $P:C^{\infty}(E) \to C^{\infty}(E)$.  The $L^2$-pairing for sections of  E may then be used to define the (formal) adjoint $P^*$ of $P$.  Specifically, $P^*$ is the differential operator of order $k$ such that
\[ (Pu,v) = (u, P^* v), \; \; \forall u, v \in C^{\infty}(E). \]
By definition, $P$ is formally self-adjoint if $P^*  = P$.

We use the following basic results from elliptic theory. These can be found, for example,  in Appendix A of \cite{Besse}.

\begin{theorem} \label{thm:L2-spectrum} Let $(\mfld,g)$ be a compact manifold and $E$ a tensor bundle over $\mfld$.  Let $P: C^{\infty}(E) \longrightarrow C^{\infty}(E)$ be a linear formally self-adjoint strongly elliptic geometric operator.  Then the extension of $P$ to an unbounded operator
\[ P: L^2(E) \longrightarrow L^2(E) \]
has a discrete $L^2$-spectrum consisting of a sequence of real-valued eigenvalues that diverge to $-\infty$.
\end{theorem}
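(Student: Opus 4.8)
The plan is to reduce this to the classical spectral theorem for self-adjoint operators with compact resolvent. First I would verify that the formally self-adjoint operator $P$, initially defined on $C^\infty(E)$, extends to a densely defined \emph{self-adjoint} (not merely symmetric) unbounded operator on $L^2(E)$. The natural domain is the Sobolev space $L^{k,2}(E)$, obtained by completing $C^\infty(E)$ in the $L^{k,2}$ norm; since $\mfld$ is compact and $C^\infty(E)$ is dense in $L^2(E)$, $P$ is densely defined. Symmetry is immediate from the identity $(Pu,v)=(u,P^*v)=(u,Pv)$ on smooth sections, extended by density and the closedness coming from elliptic estimates. To upgrade symmetry to self-adjointness I would use the a priori elliptic (G\r{a}rding) estimate $\|u\|_{L^{k,2}} \le C(\|Pu\|_{L^2} + \|u\|_{L^2})$ valid for strongly elliptic operators with smooth coefficients on a compact manifold, which shows the graph norm of $P$ is equivalent to the $L^{k,2}$ norm and in particular that the domain of $P$ is exactly $L^{k,2}(E)$; combined with the fact that $P \pm iI$ (or $P - \lambda I$ for $\lambda$ with large positive real part, using that $P$ is bounded above as in the argument below) has dense range, one concludes essential self-adjointness on $C^\infty(E)$.

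Next I would establish that $P$ has compact resolvent. Strong ellipticity and the G\r{a}rding inequality give, after possibly subtracting a large constant $\mu$, that $\mu I - P$ is invertible $L^2(E) \to L^{k,2}(E)$ with bounded inverse; composing with the Rellich--Kondrachov compact embedding $L^{k,2}(E) \hookrightarrow L^2(E)$ (valid since $\mfld$ is compact and $k \ge 1$) shows $(\mu I - P)^{-1}$ is a compact self-adjoint operator on $L^2(E)$. The spectral theorem for compact self-adjoint operators then provides an orthonormal basis of $L^2(E)$ consisting of eigentensors of $(\mu I - P)^{-1}$, with eigenvalues accumulating only at $0$; translating back, $P$ has a discrete spectrum of finite-multiplicity real eigenvalues $\lambda_j$ with $|\lambda_j| \to \infty$ and no finite accumulation point. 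Ellipticity with smooth coefficients also gives, via elliptic regularity, that each eigentensor is smooth.

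It remains to rule out the eigenvalues diverging to $+\infty$, i.e. to show the spectrum is bounded above. For this I would integrate by parts: for a smooth section $u$, strong ellipticity of $P$ (written as $Pu = -\sum a_I \partial^I u$ with the coercivity condition on the top-order part) yields after integration by parts an estimate of the form $(Pu,u) \le -c\|u\|_{L^{1,2}}^2 + C\|u\|_{L^2}^2 \le C\|u\|_{L^2}^2$, where the lower-order terms are absorbed using Cauchy--Schwarz and the compactness of $\mfld$ (so the coefficients are bounded). Hence $(Pu,u) \le K\|u\|_{L^2}^2$ for a finite constant $K$ depending on $g$, $E$, and the coefficients of $P$; applying this to a normalized eigentensor gives $\lambda_j \le K$ for every $j$. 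Combined with the discreteness and the absence of a finite accumulation point, this forces $\lambda_j \to -\infty$, completing the proof.

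The main obstacle is the careful bookkeeping in the integration-by-parts step that yields the upper bound $K$: one must integrate by parts in the highest-order term to expose the coercive quadratic form, control the resulting lower-order commutator and curvature terms (which are bounded because $\mfld$ is compact), and handle the fact that $P$ acts on a tensor bundle rather than on functions, so the ``integration by parts'' uses the metric connection and produces curvature contractions. A secondary subtlety is making the self-adjointness argument fully rigorous — symmetry is routine, but genuine self-adjointness (needed to apply the spectral theorem rather than merely the symmetric-operator spectral theory) requires identifying the domain precisely via the elliptic estimate, which is exactly where strong ellipticity and smoothness of coefficients enter. Since the statement attributes the result to Appendix A of \cite{Besse}, in the actual writeup one may legitimately cite that reference for the bulk of this and only indicate the structure of the argument.
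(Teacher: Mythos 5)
The paper does not supply a proof of this theorem: it is stated as a classical background fact in Appendix A, with the sentence immediately before it deferring the reader to Appendix A of Besse. So there is no ``paper's own proof'' to compare against --- you are filling in a deliberate citation.

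That said, your proposed argument is the standard one and it is correct. Establishing that the closure of $P|_{C^\infty(E)}$ has domain exactly $L^{k,2}(E)$ via the a priori elliptic estimate, identifying that closure with the adjoint by elliptic regularity (or, as you do, via the range criterion --- either works, and the direct identification $\overline{P}=P^*$ is perhaps cleaner and avoids any apparent circularity with ``the argument below''), composing $(\mu I-P)^{-1}$ with the Rellich--Kondrachov embedding to obtain a compact self-adjoint resolvent, and then invoking the spectral theorem gives discreteness and real eigenvalues with $|\lambda_j|\to\infty$. The G\r{a}rding-type estimate $(Pu,u)\le -c\|u\|_{L^{1,2}}^2+C\|u\|_{L^2}^2$ (note the paper's sign convention for strong ellipticity, with the $-$ on the top-order symbol, is tailored so that $P$ is bounded above, like the rough or Lichnerowicz Laplacian) then forces $\lambda_j\to-\infty$. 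One minor remark: the paper's stated definition of strong ellipticity, with $|\xi|^2$ on the right-hand side, is already specialized to second order, which matches the Lichnerowicz Laplacian of interest; your write-up should fix $k=2$ or adjust the exponent accordingly, but this is cosmetic and does not affect the argument.
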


In addition to this extension of the linear differential operator $P$ to $L^2(E)$, it also extends naturally to an operator mapping $C^{k+l,\alpha}(E)$ to $C^{l,\alpha}(E)$, and mapping $L^{k+l,p}(E)$ to $L^{l,p}(E)$. For $P$ acting on these H\"older and Sobolev spaces, one has the following elliptic estimates (see  Theorem 40 in the Appendix of \cite{Besse}):

\begin{theorem}[Appendix Theorem 27, \cite{Besse}]  For the linear differential operator $P$ described above, there exist positive constants $c_1$ and $c_2$ such that 
\begin{enumerate}
\item for every $u \in C^{k+l,\alpha}(E)$,
\[ \|u \|_{C^{k+l,\alpha}} \leq c_1( \| Pu \|_{C^{l,\alpha}} + \| u \|_{C^{0,\alpha}} ); \]
\item for every $u \in L^{k+l,p}(E), \; 1 < p < \infty$,
\[ \|u \|_{L^{k+l,p}} \leq c_2( \| Pu \|_{L^{l,p}} + \| u \|_{L^p} ). \]
\end{enumerate}
\end{theorem}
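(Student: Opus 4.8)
The plan is to deduce both global estimates on the compact manifold $\mfld$ from their local Euclidean counterparts: the classical interior Schauder estimate for part (1), and the Agmon--Douglis--Nirenberg / Calder\'on--Zygmund $L^p$ estimate for elliptic systems for part (2). I would take these Euclidean estimates as the analytic input. Concretely, if $Q$ is an elliptic system of order $k$ on a ball $B \subset \bR^m$ whose top-order coefficients lie in $C^{l,\alpha}(\overline{B})$, then on any concentric ball $B' \Subset B$ one has $\| v \|_{C^{k+l,\alpha}(B')} \le c\big( \|Qv\|_{C^{l,\alpha}(B)} + \|v\|_{C^0(B)} \big)$, and similarly in the Sobolev scale. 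The standard route to such estimates is the method of freezing coefficients: near a point one approximates $Q$ by the constant-coefficient operator $Q_{\mathrm{frozen}}$ obtained by evaluating the principal part there, invokes the estimate for constant-coefficient elliptic systems (which follows from explicit fundamental solutions, or from Mikhlin-type multiplier theorems in the $L^p$ case), and treats the difference $Q - Q_{\mathrm{frozen}}$ as a perturbation that, on sufficiently small balls, has small operator norm $C^{k+l,\alpha} \to C^{l,\alpha}$ and can be absorbed into the left-hand side.

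With the local estimates in hand, I would globalize using the finite cover $\{(B_i,\phi_i)\}_{i=1}^N$ and subordinate partition of unity $\{\psi_i\}$ already introduced in the Appendix, arranged so that each $\mathrm{supp}(\psi_i)$ sits in a slightly smaller ball on which the Euclidean estimate applies, and so that the tensor bundle $E$ is trivialized over $B_i$ (making $P$, read in coordinates, an elliptic system with coefficients of the asserted regularity). For each $i$ the section $\psi_i u$ is compactly supported in a chart, and $P(\psi_i u) = \psi_i\, Pu + [P,\psi_i] u$, where the commutator $[P,\psi_i]$ is a differential operator of order at most $k-1$. Applying the local estimate to $\psi_i u$ in coordinates and summing over $i$, while using that the norm built from the cover is equivalent to the intrinsic $C^{k+l,\alpha}$ norm, yields
\[ \| u \|_{C^{k+l,\alpha}} \le C\big( \| Pu \|_{C^{l,\alpha}} + \| u \|_{C^{k+l-1,\alpha}} \big), \]
the intermediate-order term coming entirely from the commutators, which involve at most $k-1$ derivatives of $u$.

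Finally I would remove the term $\|u\|_{C^{k+l-1,\alpha}}$ by the interpolation inequality for H\"older norms on a compact manifold: for every $\varepsilon > 0$ there is $C_\varepsilon$ with $\| u \|_{C^{k+l-1,\alpha}} \le \varepsilon \| u \|_{C^{k+l,\alpha}} + C_\varepsilon \| u \|_{C^{0,\alpha}}$; choosing $\varepsilon$ small and absorbing the first term gives part (1) with $c_1$ depending only on $P$, $g$, the cover, and $\alpha$. Part (2) follows by the identical scheme, substituting the Calder\'on--Zygmund $L^p$ estimate for the Schauder estimate and the Gagliardo--Nirenberg interpolation $\| u \|_{L^{k+l-1,p}} \le \varepsilon \| u \|_{L^{k+l,p}} + C_\varepsilon \| u \|_{L^p}$. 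The main obstacle is the Euclidean local estimate together with its perturbation step: making the freezing-coefficients argument quantitative enough that $Q - Q_{\mathrm{frozen}}$ is genuinely small as an operator $C^{k+l,\alpha} \to C^{l,\alpha}$ on balls of controlled radius, which is exactly where the $C^{l,\alpha}$-regularity hypothesis on the top-order coefficients enters. The globalization and absorption steps are routine once this is in hand; for the purposes of this paper one simply cites \cite{Besse}, but the scheme above is the content behind that citation.
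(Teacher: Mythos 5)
The paper does not prove this theorem itself --- it is stated as a citation of Appendix Theorem 27 in Besse's \emph{Einstein manifolds}, and no argument is given in the text. Your proposal correctly reconstructs the standard proof that underlies that citation: freeze coefficients to get the constant-coefficient Euclidean Schauder and Calder\'on--Zygmund estimates, absorb the perturbation on small balls, globalize with the finite cover and partition of unity already set up in the Appendix using the commutator identity $P(\psi_i u) = \psi_i Pu + [P,\psi_i]u$ with $[P,\psi_i]$ of order $k-1$, and then remove the intermediate norm by Ehrling/Gagliardo--Nirenberg interpolation. This is precisely the argument you would find in Besse or in Gilbarg--Trudinger, and it is correct; the only thing you might add for completeness is the remark (which the paper itself makes after the theorem) that the resulting constants depend on bounds for the coefficients of $P$, which in your scheme enters through the size of the balls on which the freezing-coefficient perturbation can be absorbed.
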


Although it is not stated explicitly in the reference above, the constants $c_1$ and $c_2$ appearing in the above estimates depend on upper and lower bounds for the coefficients of the operator; see \cite{GT}.

\end{document}